\theoremstyle{plain}
\newtheorem{theorem}{Theorem}[section]
\newtheorem{lemma}[theorem]{Lemma}
\newtheorem{corollary}[theorem]{Corollary}
\newtheorem{proposition}[theorem]{Proposition}
\theoremstyle{definition}
\newtheorem{remark}[theorem]{Remark}
\newtheorem{example}[theorem]{Example}
\newtheorem{definition}{Definition}[section]
\newtheorem{open}[theorem]{Open question}
\newcommand{\N}{\mathbb N}	
\newcommand{\Z}{\mathbb Z}	
\newcommand{\Ns}{{\mathbb N}\setminus\{0\}}	
\renewcommand{\S}{\Sigma}
\newcommand{\Sk}{\Sigma_k}
\newcommand{\D}{\mathcal D}	
\newcommand{\Dr}{\Delta_r}	
\newcommand{\dr}{\delta^r}
\newcommand{\F}{{\mathcal F_r}}
\newcommand{\lb}{\llbracket}
\newcommand{\rb}{\rrbracket}
\DeclareMathOperator*{\card}{card}
\newcommand{\myvec}[1]{\ensuremath{\begin{pmatrix}#1\end{pmatrix}}}
\definecolor{darkgreen}{RGB}{20,200,10}
\begin{document}

\title{Discrete correlations of order 2 of generalised \\ Rudin--Shapiro sequences: 
a combinatorial approach\footnotetext{Last update:~\today}}

\author{Ir\`ene Marcovici, Thomas Stoll, Pierre-Adrien Tahay}

\date{}

\maketitle

\begin{abstract}
We introduce a family of block-additive automatic sequences, that are obtained by allocating a weight to each couple of digits, and defining the $n$th term of the sequence as being the total weight of the integer $n$ written in base $k$. Under an additional \emph{difference condition} on the weight function, these sequences can be interpreted as generalised Rudin--Shapiro sequences, and we prove that they have the same correlations of order $2$ as sequences of symbols chosen uniformly and independently at random. The speed of convergence is very fast and is independent of the prime factor decomposition of $k$. This extends recent work of Tahay~\cite{Ta20}. The proof relies on direct observations about base-$k$ representations of integers and combinatorial considerations. We also provide extensions of our results to higher-dimensional block-additive sequences. 

\medskip

\noindent
\emph{Keywords:} automatic sequences, pseudorandom sequences, Rudin--Shapiro sequences, difference matrices, discrete correlations

\smallskip
	
\renewcommand{\contentsname}{\vspace{1cm}}
{\footnotesize\tableofcontents}
\end{abstract}

\section{Introduction}

A $k$-automatic sequence on a finite set $G$ is a sequence $u\in G^{\N}$ that can be computed by a deterministic finite automaton with output (DFAO) in the following way: the $n$-th term of the sequence is a function of the state reached by the automaton after reading the representation of the integer $n$ in base $k$. Alternatively, a $k$-automatic sequence can also be defined as a sequence generated by a $k$-uniform morphism. We refer to the book of Allouche and Shallit~\cite{AlSh03} for a complete survey on automatic sequences.

Although automatic sequences are deterministic sequences having a very simple algorithmic description, some of them exhibit a complex behaviour. In this work, we are interested in exploring ``how random'' an automatic sequence can look like. There are many different ways to measure the ``random aspect'' of a deterministic sequence. Here, we will study families of automatic sequences having the same discrete correlations of order $2$ as sequences of symbols chosen uniformly and independently at random. We also provide explicit estimates for the speed of convergence. 

The sequences we will consider are \emph{block-additive sequences}. They are obtained by allocating a weight to each couple of digits, and defining the $n$the term of the sequence as being the total weight of the integer $n$ written in base $k$. This weight is obtained by sliding the representation of the integer $n$ in base $k$ with a window of length $2$ (or more generally, of length $\ell\geq 1$), and summing all the weights read. The name \emph{block-additive} was already used in previous articles~\cite{DrGrLi08,Mu18}. With the terminology of Cateland~\cite{Ca92}, these sequences are \emph{digital sequences}. In the special case where the weight matrix is a \emph{difference matrix}, we will say that the automatic sequence obtained is a \emph{generalised Rudin--Shapiro sequence}, and prove that it has the same correlations of order $2$ as a sequence of symbols chosen uniformly and independently at random.

As we will comment on further in the article, our terminology of \emph{generalised Rudin--Shapiro sequences} is consistent with the definitions of~\cite{GrShSt09, Ta20}, and also intersects previous notions of generalised Rudin--Shapiro sequences, such as the one of Qu\'effelec~\cite{Qu87} (see~\cite{GrShSt09} for further references). For other generalisations of the Rudin--Shapiro sequence that we will not investigate here, see Allouche and Shallit~\cite{AS93} and Mauduit and Rivat~\cite{MR15}.

As in the articles of Grant et al.~\cite{GrShSt09} and Tahay~\cite{Ta20}, we study the correlations of order $2$ of generalised Rudin--Shapiro sequences, but rather than making use of exponential sums, we here only employ direct arguments relying on the base-$k$ decomposition of the integers $n$ and $n+r$, for a fixed $r$. This approach highlights the combinatorial role played by the \emph{difference condition} defining a difference matrix, and allows to obtain more precise estimates on the correlations of order $2$. Furthermore, in addition to studying the asymptotic proportion of integers $n$ satisfying $u_n=u_{n+r}$, we provide results on the proportion of integers for which $(u_n,u_{n+r})=(i,j)$, for any possible value of the couple $(i,j)\in G^2$. Precisely, we prove that the limit is equal to $1/ |G|^2$ for all $(i,j)\in G^2$ and for any $r\in\Ns$, as for an i.i.d. sequence of symbols uniformly drawn in $G$. After considering the one-dimensional case, we also mention extensions of our results to higher-dimensional block-additive sequences.

\section{Definitions and presentation of the results}

In all the article, we denote by $\N$ the set of non-negative integers.

\subsection{Block-additive sequences of rank 2}

For $k\in\Ns$, we define $\Sk=\{0,\ldots,k-1\}$, and we denote by $[n]_k$ the representation of the integer $n\in\N$ in base $k$. By definition, it is the unique sequence $x=(x_i)_{i\in\N}\in \Sk^\N$ containing finitely many non-zero values, such that $$n=\sum_{i\in\N} x_i k^i.$$ 
We will write $$\begin{array}{rccccc} [n]_k=&x_0&x_1&x_2& x_3&\cdots\end{array}.$$
We also introduce the notation $\ell_n=\min\{i\in \N : \forall j>i, \, x_i=0\}$, and we define  $$\sigma_k(n)=\sum_{i\in\N} x_i=\sum_{i=0}^{\ell_n} x_i,$$
the $k$-ary sum-of-digits function.

\begin{definition}\label{def:blockadd}
Let $(G,+)$ be a finite abelian group, let $k\in\Ns$, and let $f:\Sk\times \Sk\to G$ be a function satisfying $f(0,0)=0$. We say that the sequence $u=(u_n)_{n\in\N}\in G^{\N}$ is a \emph{block-additive sequence (of rank 2) in base $k$} of \emph{weight function (or matrix)} $f$ if for any integer $n\in\N$, we have
$$u_n=\sum_{i\in\N} f(x_i,x_{i+1}),$$
where $[n]_k=x$.
\end{definition}

\begin{example}[Prouhet--Thue--Morse sequence] The Prouhet--Thue--Morse sequence is given by
$$\forall n\in\N, \quad u_n\equiv\sigma_2(n) \pmod{2}.$$
The Thue-Morse sequence is a block-additive sequence in base $k=2$, with $G=\Z_2$, and weight function $f:\S_2\times\S_2\to G$ defined by: $\forall (i,j)\in G^2, f(i,j)=i.$ \\
The first terms are given by $u=(0,1,1,0,1,0,0,1,1,0,0,1,0,1,1,0,\ldots)$.\\
We represent below a DFAO computing this sequence.
\begin{center}
\begin{tikzpicture}[->,>=stealth',shorten >=1pt,auto,node distance=2.8cm, semithick]
\node[initial,state] (A) {$q_0|0$};
\node[state] (B) [right of=A] {$q_1|1$};
\path (A) edge [loop above] node {0} (A)
(A) edge [bend left]  node {1} (B)
(B) edge [bend left] node {1} 
(A) edge [loop above] node {0} (B);
\end{tikzpicture}
\end{center}
\end{example}

\begin{example}[Classical Rudin--Shapiro sequence] The (classical) Rudin--Shapiro sequence on $G=\Z_2$ can be defined as the block-additive sequence in base $k=2$ of weight function $f:\S_2\times\S_2\to G$ given by $\forall (i,j)\in G^2, \quad f(i,j)=ij$.
In other words, $u_n$ gives the parity count of the number of (possibly overlapping) occurrences of the block $11$ in the binary expansion of $n$. \\
The first terms are given by $u=(0,0,0,1,0,0,1,0,0,0,0,1,1,1,0,1,\ldots)$. \\
\end{example}

The following proposition is straightforward, for the sake of completeness we include the proof.

\begin{proposition}\label{prop:aut} If a sequence is block-additive in base $k$, then it is a $k$-automatic sequence.
\end{proposition}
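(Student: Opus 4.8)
The plan is to construct a deterministic finite automaton with output that computes the sequence $u$, reading the base-$k$ representation of $n$ digit by digit. The essential observation is that the defining sum $u_n=\sum_{i\in\N} f(x_i,x_{i+1})$ is a sliding-window computation over consecutive pairs of digits, so to update the running total as we scan the digits we only need to remember two pieces of information: the partial sum accumulated so far, and the previous digit read. This suggests taking the state set to be $G\times\Sk$, where the first coordinate records the current value in $G$ and the second coordinate remembers the last digit, so that when the next digit arrives we can add the corresponding weight $f(\cdot,\cdot)$.

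First I would fix the reading convention. Since $[n]_k=x_0x_1x_2\cdots$ is written least-significant-digit first, I would let the automaton read the digits of $n$ in that order, from $x_0$ up to $x_{\ell_n}$. I would set the initial state to $(0,0)\in G\times\Sk$, reflecting the convention $f(0,0)=0$ together with the fact that no weight has yet been accumulated. The transition function $\delta$ would act on a state $(g,a)$ upon reading a digit $b\in\Sk$ by $\delta\bigl((g,a),b\bigr)=\bigl(g+f(a,b),\,b\bigr)$: we add the weight of the pair formed by the previously stored digit $a$ and the freshly read digit $b$, and we update the stored digit to $b$. The output function $\tau\colon G\times\Sk\to G$ would simply project onto the first coordinate, $\tau(g,a)=g$.

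The main thing to verify is correctness, namely that running this automaton on the digits $x_0,x_1,\ldots,x_{\ell_n}$ and applying $\tau$ to the final state yields exactly $u_n$. I would argue by a short induction on the number of digits read: after processing $x_0,\ldots,x_m$ the stored value is $\sum_{i=0}^{m-1} f(x_i,x_{i+1})$ and the stored digit is $x_m$. At the last step $m=\ell_n$ the accumulated value is $\sum_{i=0}^{\ell_n-1} f(x_i,x_{i+1})$, and since $x_j=0$ for all $j>\ell_n$ the remaining terms $f(x_i,x_{i+1})$ with $i\ge \ell_n$ all vanish because $f(0,0)=0$; hence the output equals $\sum_{i\in\N} f(x_i,x_{i+1})=u_n$, as desired.

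The one point requiring a little care, and the place I expect to be the main obstacle, is the treatment of representations that are not of a fixed length: different integers have base-$k$ expansions of different lengths, and a DFAO must give a well-defined output regardless of how many leading zeros are appended to the input word. I would therefore check that the automaton is insensitive to trailing zeros in the scanned word, which is precisely what the condition $f(0,0)=0$ guarantees: reading an extra digit $0$ from a state whose stored digit is $0$ leaves the accumulated value unchanged and keeps the stored digit equal to $0$, so the output is stable. This confirms that $\tau\circ\delta^*$ is well defined on $n$ independently of padding, and completes the construction showing $u$ is $k$-automatic.
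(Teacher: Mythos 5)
Your overall strategy --- state set $G\times\Sk$ recording the running sum and the last digit read, transition $(g,a)\xrightarrow{\;b\;}(g+f(a,b),b)$, output the first coordinate --- is exactly the paper's construction. The one place you diverge is the reading direction, and that is where the proof breaks: the paper reads the most significant digit first, whereas you read least significant first, and with your convention the automaton does not compute $u_n$. Two boundary terms go wrong. First, your induction claim is already false at the base case: starting from $(0,0)$ and reading $x_0$ lands in $(f(0,x_0),x_0)$, so after processing $x_0,\dots,x_m$ the accumulated value is $f(0,x_0)+\sum_{i=0}^{m-1}f(x_i,x_{i+1})$; the spurious term $f(0,x_0)$ comes from pairing the dummy initial digit as if it were $x_{-1}=0$, and it need not vanish since only $f(0,0)=0$ is assumed. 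Second, the term $f(x_{\ell_n},x_{\ell_n+1})=f(x_{\ell_n},0)$ genuinely belongs to the sum defining $u_n$ and is \emph{not} zero in general (again, only $f(0,0)=0$ is assumed, and $x_{\ell_n}\neq 0$ for $n\geq 1$), so your assertion that all terms with $i\ge\ell_n$ vanish is wrong; your padding check fails for the same reason, since the first appended $0$ is read from a state whose stored digit is $x_{\ell_n}\neq 0$ and shifts the output by $f(x_{\ell_n},0)$. Concretely, for the Thue--Morse weight $f(i,j)=i$ your automaton outputs $\sigma_2(n)-x_{\ell_n}$ rather than $\sigma_2(n)$, i.e.\ the complement of the Thue--Morse sequence for all $n\geq 1$.

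The repair is either to do what the paper does --- read msd-first with transition $\delta((g,i),j)=(g+f(j,i),j)$, so that the very first transition contributes the legitimate term $f(x_{\ell_n},0)$, the last one contributes $f(x_0,x_1)$, and leading-zero padding is absorbed because $\delta((0,0),0)=(0,0)$ --- or, if you keep lsd-first reading, to add a distinguished initial state whose first transition contributes nothing and to change the output map to $\tau(g,a)=g+f(a,0)$ so as to supply the missing final term. As written, however, the correctness verification contains false steps and the constructed DFAO computes a different sequence.
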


\begin{proof} Let $Q=G\times \Sk$, $q_0=(0,0)$, let $\delta:Q\times \S_k\to Q$ be defined by
$$\delta((g,i),j)=(g+f(j,i),j),$$
and let $\tau:Q\to G$ be defined by $\tau(g,i)=g$. 
The DFAO $(Q,\S_k,\delta,q_0,\tau)$ computes the block-additive sequence $u=(u_n)_{n\in\N}$ of weight function $f$, by reading the representation of the integer $n$ in base $k$ starting with the most significant digit, and using the output map $\tau$.
\end{proof}

\begin{remark}\label{rem:morphic} Alternatively, a block-additive sequence has the following morphic description. Let again $Q=G\times \Sk$ and $q_0=(0,0)$, and let $\phi:Q^*\to Q^*$ be the $k$-uniform morphism satisfying, for a state $s=(g,i)\in Q$, $\phi(s)=s_0\cdots s_{k-1}$, with $s_j=(g+f(j,i),j).$
Consider the fixed point $\phi^{\omega}(q_0)\in Q^{\N}$. Then, the letter-to-letter projection of $\phi^{\omega}(q_0)$ by $\tau$ is the block-additive sequence of the function $f$.
\end{remark}

\begin{example} We represent below the DFAO given by the proof of Prop.~\ref{prop:aut} for the (classical) Rudin--Shapiro sequence.
\begin{center}
\begin{tikzpicture}[->,>=stealth',shorten >=1pt,auto,node distance=2.8cm, semithick]
\node[initial,state] (A) {$(0,0)|0$};
\node[state] (B) [right of=A] {$(0,1)|0$};
\node[state] (C) [below of=B] {$(1,1)|1$};
\node[state] (D) [below of=A] {$(1,0)|1$};
\path (A) edge [loop above] node {0} (A)
(D) edge [loop left] node {0} (D)
(A) edge [bend left]  node {1} (B)
(B) edge [bend left]  node {1} (C)
(C) edge [bend left]  node {0} (D)
(D) edge [bend left]  node {1} (C)
(C) edge [bend left]  node {1} (B)
(B) edge [bend left]  node {0} (A) ;
\end{tikzpicture}
\end{center}
With the notations $q_0=(0,0), q_1=(0,1), q_2=(1,0), q_3=(1,1)$, the $2$-uniform morphism described above is here given by
$$\phi(q_0)=q_0q_1, \; \phi(q_1)=q_0q_2, \; \phi(q_2)=q_3q_1, \; \phi(q_3)=q_3q_2,$$
with $\tau(q_0)=\tau(q_1)=0, \tau(q_2)=\tau(q_3)=1.$
\end{example}

\subsection{Difference matrices and generalised Rudin--Shapiro sequences}

\begin{definition} Let $(G,+)$ be a finite abelian group, and let $k\in\Ns$. A \emph{difference matrix} of size $k$ is a matrix $D=(d(i,j))_{(i,j)\in \Sk\times\Sk}\in G^{\Sk\times\Sk}$ satisfying the following \emph{difference condition}
$$\forall (i,j)\in \Sk\times\Sk \mbox{ with }\; i\not= j, \quad \forall g\in G, \quad \card \Big\{h\in \Sk : d(i,h)-d(j,h) = g \Big\} ={k\over |G|}.$$
\end{definition}

In other words, $D$ is a difference matrix if for any $(i,j)\in\Sk\times\Sk$ with $i\neq j$, the set $\{d(i,h)-d(j,h) : h\in\Sk\}$ contains every element of $G$ equally often. Note that the difference condition requires the integer $k$ to be a multiple of $|G|$. We introduce the notation $\pi=k/|G|$, and we have thus $\pi\in\Ns$. We denote by $\D(G,k)$ the set of difference matrices of size $k$ over the group $G$.

\begin{definition}\label{def:grs} A block-additive sequence is a \emph{generalised Rudin--Shapiro sequence} if its weight function $f$ is such that the matrix $(f(i,j))_{(i,j)\in \Sk\times\Sk}\in G^{\Sk\times\Sk}$ is a difference matrix. 
\end{definition}

\begin{example} 
\begin{enumerate}
\item The Thue-Morse sequence is \emph{not} a generalised Rudin--Shapiro sequence, since its weight function is given by the matrix $\begin{pmatrix}0&0\\ 1&1\end{pmatrix}$, which does not belong to $\D(\Z_2,2)$.
\item The classical Rudin--Shapiro sequence is a generalised Rudin--Shapiro sequence, since its weight function is given by the matrix $\begin{pmatrix}0&0\\ 0&1\end{pmatrix}$, which belongs to $\D(\Z_2,2)$.
\end{enumerate}
\end{example}

Let us present different ways to construct difference matrices, and thus to define generalised Rudin--Shapiro sequences. 

\begin{example} Let $p$ be a prime number, and let $G=\Z_p$. Then, the matrix $D=(d(i,j))_{(i,j)\in \S_p\times\S_p}$ defined by $d(i,j)\equiv ij \pmod{p}$ is a difference matrix. The block-additive sequences thus obtained correspond to Queff\'elec's generalisation of the Rudin--Shapiro sequence~\cite[Section 4]{Qu87}.  By definition, if $[n]_p=x,$ we have $u_n\equiv\sum_{i\in\N}x_ix_{i+1}\pmod{p}$.
\begin{itemize}
\item As a particular case, for $p=2$, the difference matrix is given by $\begin{pmatrix}0&0\\ 0&1\end{pmatrix}$, and we recover the classical Rudin--Shapiro sequence.
\item For $p=3$, the difference matrix is given by $\begin{pmatrix}0&0&0\\ 0&1&2\\ 0&2&1 \end{pmatrix}$.
\end{itemize}
\end{example}

\begin{example} For $k=3$, another example of a difference matrix on $G=\Z_3$ is given by $\begin{pmatrix}0&1&1\\ 1&0&1\\ 1&1&0 \end{pmatrix}$. In the sequence obtained, the term $u_n$ counts (modulo 3) the number of blocks of distinct digits in the base-$3$ decomposition of the integer $n$.
\end{example}

It can be seen that for an even integer $k\geq 4$, there exists \emph{no} difference matrix of size $k$ on $G=\Z_k$. Indeed, if $k$ is even, we have $\sum_{i=0}^{k-1}i\equiv{k/2} \pmod{k}$. But if $\sum_{h=0}^{k-1} \left(d(i_1,h)-d(i_2,h)\right)\equiv{k/2} \pmod{k}$ and $\sum_{h=0}^{k-1} \left(d(i_2,h)-d(i_3,h)\right)\equiv {k/2} \pmod{k}$, then $\sum_{h=0}^{k-1} \left(d(i_1,h)-d(i_3,h)\right) \equiv 0 \pmod{k}$, so that we obtain a contradiction.
 
However, the following theorem shows the existence of difference matrices at least for all powers of prime numbers. We include the proof for the sake of clearness.

\begin{theorem}\label{thm:diffmat}\cite[Theorem 6.6]{HeSlSt99} For any prime number $p$ and any integers $m,n\in\Ns$ such that $m\leq n$, there exists a finite abelian group $G$ of order $p^m$ such that the set $\D(G,p^n)$ is non-empty.
\end{theorem}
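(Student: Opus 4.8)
The plan is to construct the group $G$ and the difference matrix explicitly using the algebraic structure of finite fields. Since $p^n$ is a prime power, let $\mathbb{F}_q$ denote the finite field with $q=p^n$ elements, and identify $\Sigma_{p^n}$ with the $p^n$ field elements via a fixed bijection. For the group $G$ of order $p^m$, I would take $G=(\mathbb{Z}_p)^m$, the elementary abelian $p$-group; equivalently, $G$ can be realised as an $m$-dimensional $\mathbb{F}_p$-subspace, or as the additive group of $\mathbb{F}_{p^m}$ viewed inside $\mathbb{F}_{p^n}$ (note $\mathbb{F}_{p^m}\subseteq\mathbb{F}_{p^n}$ precisely because $m\le n$, which is where the hypothesis $m\le n$ enters).

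First I would fix the bilinear form underlying the construction. The multiplication $\mathbb{F}_{p^n}\times\mathbb{F}_{p^n}\to\mathbb{F}_{p^n}$ is $\mathbb{F}_p$-bilinear, and composing with a surjective $\mathbb{F}_p$-linear map $\lambda:\mathbb{F}_{p^n}\to G$ (for instance the trace map down to $\mathbb{F}_{p^m}$, or any surjective linear projection onto the $m$-dimensional target) gives a map $d(i,j)=\lambda(ij)$ taking values in $G$. The key algebraic fact I would invoke is that for any nonzero $a\in\mathbb{F}_{p^n}$, the map $h\mapsto\lambda(ah)$ is a surjective $\mathbb{F}_p$-linear map from $\mathbb{F}_{p^n}$ onto $G$; surjectivity holds because multiplication by $a\ne 0$ is an automorphism of $\mathbb{F}_{p^n}$ and $\lambda$ is surjective by choice.

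The heart of the verification is the difference condition. Fix $i\ne j$ in $\Sigma_{p^n}$; then $a=i-j\ne 0$, and by bilinearity $d(i,h)-d(j,h)=\lambda(ih)-\lambda(jh)=\lambda((i-j)h)=\lambda(ah)$. So the quantity I must count is $\operatorname{card}\{h\in\mathbb{F}_{p^n}:\lambda(ah)=g\}$, which is exactly the number of preimages of $g$ under the surjective linear map $h\mapsto\lambda(ah)$. A surjective homomorphism between finite abelian groups has all fibres of equal size, namely $p^n/|G|=p^n/p^m$, and this equals $k/|G|=\pi$ with $k=p^n$. This holds for every $g\in G$, so the difference condition is satisfied, establishing $D\in\mathcal{D}(G,p^n)$ and hence that $\mathcal{D}(G,p^n)$ is non-empty.

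The main obstacle is not any single computation but rather pinning down the correct linear-algebraic packaging so that surjectivity of $h\mapsto\lambda(ah)$ is transparent and the fibre-counting is clean; the condition $m\le n$ must be used exactly to guarantee that a surjective $\mathbb{F}_p$-linear map $\lambda:\mathbb{F}_{p^n}\to G$ onto a group of order $p^m$ exists (one cannot surject a smaller space onto a larger one, so $m\le n$ is necessary), and the fact that multiplication by a nonzero element is bijective is what converts the fixed shift $a=i-j$ into a harmless reindexing. I would expect the author's proof to proceed along essentially these lines, possibly phrased directly in terms of the trace form $\operatorname{Tr}_{\mathbb{F}_{p^n}/\mathbb{F}_p}$ and an auxiliary $\mathbb{F}_p$-linear functional, but the structural content is the equidistribution of values of a nondegenerate bilinear pairing.
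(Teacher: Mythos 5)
Your construction is essentially the paper's: the paper takes $\lambda$ to be the truncation of $\beta_0+\beta_1x+\cdots+\beta_{n-1}x^{n-1}$ to its first $m$ coefficients (a surjective $\mathbb{F}_p$-linear projection onto a group of order $p^m$), sets $d(i,j)=\lambda(i\cdot j)$, and leaves as ``one can check'' exactly the verification you spell out --- bilinearity, bijectivity of multiplication by $i-j\neq 0$, and equal fibre sizes of a surjective homomorphism of finite abelian groups. One caveat: your parenthetical claim that $\mathbb{F}_{p^m}\subseteq\mathbb{F}_{p^n}$ follows from $m\leq n$, and your suggested trace map down to $\mathbb{F}_{p^m}$, both actually require $m\mid n$; this does not affect your main argument, which only needs \emph{some} surjective $\mathbb{F}_p$-linear map onto an $m$-dimensional space, and that is precisely what $m\leq n$ provides.
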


\begin{proof} Let $H=\mathbb{F}_{p^m}$, and $G=\mathbb{F}_{p^n}$ be the finite fields with respectively $p^m$ and $p^n$ elements. We can represent the elements of $G$ by polynomials of the form $\beta_0+\beta_1x+\cdots+\beta_{n-1}x^{n-1},$ with $\beta_0,\ldots,\beta_{n-1} \in \mathbb{Z}_p$. The group $(H,+)$ can be seen as the subgroup of $(G,+)$ made of the polynomials of degrees smaller or equal to $m$. 
Let $\varphi : G \rightarrow H$ be the function which maps the element $\beta_0+\beta_1x+\cdots+\beta_{n-1}x^{n-1}$ to the element $\beta_0+\beta_1x+\cdots+\beta_{m-1}x^{m-1}$, and for two polynomials $(\alpha(x),\beta(x))\in G^2$, let $d(\alpha(x),\beta(x))=\varphi(\alpha(x)\cdot\beta(x))$, where $\cdot$ denotes the multiplication in the field $G$. Then, one can check that the matrix $D=(d(i,j))_{(i,j)\in \S_{p^n}\times\S_{p^n}}$ (we identify $\S_{p^n}$ with $G$, using any bijection) is a difference matrix on $(G,+)\cong(\Z_p^n,+)$.
\end{proof}

Note that there exist difference matrices which do not belong to the families described in the proof of Theorem~\ref{thm:diffmat} (see~\cite[p.127]{La15} and ~\cite[Table 6.37]{HeSlSt99}). For example, the matrix
$$\begin{pmatrix}
0&0&0&0&0&0\\
0&0&1&1&2&2\\
0&1&0&2&1&2\\
0&1&2&0&2&1\\
0&2&1&2&0&1\\
0&2&2&1&1&0
\end{pmatrix}$$
is an element of $\D(\Z_3,6)$ that is not covered by Theorem~\ref{thm:diffmat}. 

The enumeration and the classification of difference matrices is a complex task. We refer to~\cite{HeSlSt99,La15} for an indepth study of these questions and various examples of difference matrices.

\subsection{Main results}

We can now state our main results, in the one-dimensional case. We use the notation $\log_k(N)$ for the logarithm of $N$ to base $k$. 

\begin{theorem}\label{thm:main1} If $u$ is a generalised Rudin--Shapiro sequence, then for any $r\in\Ns$, $g\in G$, and $N\in\N$,
$${1\over N} \; \Big | \card\Big \{n \in \lb 0, N-1\rb : u_{n+r}-u_n=g \Big \} - {1\over |G|} \Big | \, \leq \, r\, k \, {1+\log_k(N)\over N}.$$
\end{theorem}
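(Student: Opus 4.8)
The plan is to compare the base-$k$ representations $[n]_k=x$ and $[n+r]_k=y$ and, in the telescoping difference $u_{n+r}-u_n=\sum_{i}\big(f(y_i,y_{i+1})-f(x_i,x_{i+1})\big)$, to isolate a single digit over which the difference condition will force exact equidistribution. First I would introduce, for fixed $r$ and each $n$, the highest position affected by the addition, $L=L(n)=\max\{i : x_i\neq y_i\}$, i.e.\ the place where the carry started by adding $r$ finally dies. The two elementary facts to record are that $L$ is determined by the digits $x_0,\dots,x_L$ alone (the carry travels upward, so $x_{L+1}$ plays no role in where it stops) and that $y_i=x_i$ for every $i>L$. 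Consequently all pairs $(x_i,x_{i+1})$ with $i>L$ cancel, the digit $x_{L+1}$ survives in exactly one term, and one gets
$$u_{n+r}-u_n=c(n)+\big(f(y_L,x_{L+1})-f(x_L,x_{L+1})\big),$$
where $c(n)$ depends only on $x_0,\dots,x_L$ (hence not on $x_{L+1}$), and where $x_L\neq y_L$ by the very definition of $L$.

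This identity is exactly where the difference condition enters. Since $(f(i,j))$ is a difference matrix and $x_L\neq y_L$, as the free digit $x_{L+1}=h$ runs through $\Sk$ the quantity $f(y_L,h)-f(x_L,h)$ takes every value of $G$ exactly $\pi=k/|G|$ times, and adding the constant $c(n)$ does not change this. Hence, once the lower digits $x_0,\dots,x_L$ are fixed (which fixes $L$, $y_L$ and $c(n)$), the value $u_{n+r}-u_n$ is \emph{exactly} equidistributed over $G$ as $x_{L+1}$ ranges over $\Sk$. This is the combinatorial heart of the statement and is what replaces the exponential-sum estimates of \cite{GrShSt09,Ta20}.

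It then remains to count. Writing $N=\sum_j N_j k^j$, I would decompose $\lb 0,N-1\rb$ along the digits of $N$ into the at most $1+\log_k N$ aligned slabs $A_j=\{n : x_i=N_i \text{ for } i>j,\ x_j<N_j\}$, one per digit position, and estimate the contribution of each $A_j$ to $\card\{n : u_{n+r}-u_n=g\}$ separately. Inside a slab the digits below position $j$ are unconstrained, so the integers whose carry dies below $j-1$ (those with $L\le j-2$) split into fibers on which $x_{L+1}$ runs over all of $\Sk$; by the previous paragraph each such fiber contributes the exact proportion $1/|G|$, so the bulk of every slab is perfectly uniform and produces no error at all. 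As a sanity check, for $N=k^m$ there is a single slab with no truncation, so the whole error reduces to $\le rk$ with no logarithmic factor.

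The error therefore comes solely from the integers of $A_j$ whose carry reaches the top of the slab, i.e.\ with $L\ge j-1$. A direct count shows there are only about $rk$ of these per slab: the carry reaches position $j-1$ only when the $\approx\log_k r$ lowest digits form one of the $r$ configurations that generate a carry while the intermediate digits are all equal to $k-1$. The one delicate point — and the step I expect to be the main obstacle — is that for the integers with $L=j-1$ the surviving free digit is the leading digit $x_j$ of the slab, which no longer ranges over all of $\Sk$ but only over the truncated set $\{0,\dots,N_j-1\}$; to keep the per-slab error at the order $rk$ rather than $rk(k-1)$ one must apply the difference condition a \emph{second} time, now to the pair $(x_{j-1},y_{j-1})$ with $x_j$ as the free variable, and control the deviation caused by this incomplete range. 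Summing the $O(rk)$ error over the $\le 1+\log_k N$ slabs yields $rk(1+\log_k N)$, and dividing by $N$ gives the theorem.
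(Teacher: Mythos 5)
Your proposal follows the paper's proof almost step for step: your $L(n)$ is the paper's carry index $c_n$, your identity $u_{n+r}-u_n=c(n)+\bigl(f(y_L,x_{L+1})-f(x_L,x_{L+1})\bigr)$ with $x_L\neq y_L$ and the free digit $x_{L+1}$ is exactly the paper's fibre computation (Prop.~\ref{prop:fibre1}, via the sets $\F(n)$), and your slab decomposition of $\lb 0,N-1\rb$ along the digits of $N$ is the counting in Prop.~\ref{prop:fibresRS}. The one place you depart from the paper is your final ``delicate point,'' and that is also the weak link: the difference condition equidistributes $f(y_{j-1},h)-f(x_{j-1},h)$ only when the free digit $h$ runs over \emph{all} of $\Sk$, so ``applying the difference condition a second time'' over the truncated range $\{0,\dots,N_j-1\}$ does not give you anything, and no such refinement is needed. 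The paper instead simply discards every $n$ whose fibre $\F(n)$ is not contained in $\lb 0,N-1\rb$ --- at most $r k^2+r k\,\sigma_k(N)$ integers in total, where $\sigma_k(N)$ is the digit sum of $N$ (note this is $r k N_j$ per slab, not $rk$ as you estimate, but the aggregate is the same order) --- obtains the one-sided lower bound $\card\{n<N:\Dr(n)=g\}\geq N/|G|-\pi r k(1+\log_k N)$ for every $g\in G$, and converts it into the two-sided statement of the theorem by summing over $g$ and using $\sum_{g\in G}\card\{n<N:\Dr(n)=g\}=N$. Replacing your last step by this cruder exclusion-plus-summation argument completes the proof and makes it coincide with the paper's.
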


The limit $1/ |G|$ is thus the same as for an i.i.d. sequence of symbols uniformly distributed in $G$. But the convergence is here much faster than in the random case, since the error term is of order $\log(N)/N$, while for i.i.d. sequences, the central limit theorem tells us that it is in $1/\sqrt{N}$.

\begin{remark}
For $k$ prime or a prime power, the bound in Theorem~\ref{thm:main1} is the same as the one obtained by Tahay~\cite[Theorem 4]{Ta20}. This is natural since the underlying objects (generalisations of the Rudin--Shapiro sequence) are the same. However, our generalisation of the Rudin--Shapiro sequence to other composed $k$ is different from Tahay~\cite{Ta20}: it is directly based on one single difference matrix of size $k$, while Tahay's construction uses the prime factor decomposition of $k$ and, as a side effect, the error term in his result is $N^{-1/d}$ where $d$ denotes the number of different primes appearing in the prime factor decomposition of $k$~\cite[Theorem 5]{Ta20}. The size of our error term for our generalised objects is ${\log (N)}/N$, as $N\rightarrow \infty$, which is much smaller for fixed $r$ and is independent of the arithmetic structure of $k$.
\end{remark}

\begin{theorem}\label{thm:main2} If $u$ is a generalised Rudin--Shapiro sequence, then for any $r\in\Ns$, and any $(i,j)\in G^2$, 
$$\lim_{N\to \infty} {1\over N} \; \card\Big \{n \in \lb 0, N-1\rb : (u_n,u_{n+r})=(i,j) \Big \} =  {1\over |G|^2}.$$
\end{theorem}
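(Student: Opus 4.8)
The starting point is the identity that $(u_n,u_{n+r})=(i,j)$ holds exactly when $u_n=i$ and $u_{n+r}-u_n=j-i$. Theorem~\ref{thm:main1} already controls the distribution of the difference $u_{n+r}-u_n$, and one checks that the marginal law of $u_n$ is itself asymptotically uniform; however, I would first observe that these two facts alone do \emph{not} determine the joint law as soon as $|G|\geq 3$: counting degrees of freedom, the marginals of $u_n$, of $u_{n+r}$ and of the difference impose only about $3(|G|-1)$ constraints on the $|G|^2-1$ free joint probabilities, which is insufficient when $|G|\geq 3$. The extra structure I would exploit is that, after adding $r$, the high-order digits of $n$ and of $n+r$ coincide, so that the bulk of $u_n$ and the bulk of $u_{n+r}$ are shifted by the \emph{same} group element; this common shift is what forces joint uniformity.

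Concretely, fix $r$ and $(i,j)$, and reduce to counting $n\in\lb 0,k^L-1\rb$ with $L\approx\log_k N$, the boundary being negligible. Choose an intermediate scale $m$ (sent to infinity after $N$) and split $[n]_k=x$ into a low block $x_0\cdots x_m$ and a high block $x_{m+1}x_{m+2}\cdots$. For all but a proportion $O(r/k^{m})$ of the integers $n$, the addition of $r$ produces no carry beyond position $m$, so that $[n+r]_k$ and $[n]_k$ agree on every position $\geq m+1$. On this good set I would condition on the value $x_{m+1}=a$ of the boundary digit, which decouples the two blocks at the single coupling term $f(x_m,x_{m+1})$, and write
$$u_n=P+C,\qquad u_{n+r}=Q+C,$$
where $P,Q$ depend only on the low digits $x_0,\dots,x_m$ and on $a$, while $C=\sum_{i\geq m+1}f(x_i,x_{i+1})$ depends only on the high digits and on $a$. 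The crucial point is that $C$ is \emph{common} to both coordinates, and that $C=u_{n'}$ for $n'=\lfloor n/k^{m+1}\rfloor$ is exactly a value of the sequence itself.

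The count then factorises. Summing over the (now independent) low and high configurations, $(u_n,u_{n+r})=(i,j)$ requires $Q-P=j-i$, a condition on the low block alone, together with $C=i-P$; for each admissible low block the number of high blocks achieving the prescribed value of $C$ is, by the \textbf{marginal equidistribution} of $u$, asymptotically a fraction $1/|G|$ of all high blocks, \emph{uniformly in the target value}. On the other hand, the proportion of low blocks with $Q-P=j-i$ tends to $1/|G|$ by Theorem~\ref{thm:main1} (or rather by the combinatorial argument underlying it, applied to the low block). Multiplying the two independent factors $1/|G|$ and summing over $a$ yields the announced limit $1/|G|^2$. For intuition one may note the character analogue: the only delicate correlation is $\sum_n(\chi\psi)(u_n)\,\psi(u_{n+r}-u_n)$ with $\chi$, $\psi$ and $\chi\psi$ all nontrivial, and through the common $C$ it factors into $\big(\sum_{\mathrm{high}}(\chi\psi)(C)\big)$ times a low-block sum, the first factor vanishing precisely by marginal equidistribution.

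The main obstacle is exactly this \textbf{marginal equidistribution} statement, together with the decoupling that makes the count factorise. Since the difference condition is tailored to \emph{differences} of rows (and hence to the difference $u_{n+r}-u_n$ of Theorem~\ref{thm:main1}), equidistribution of the single value $u_{n'}$ does not follow formally from Theorem~\ref{thm:main1} and must be established separately. I would prove it combinatorially from the difference condition through a second-moment (pairing) argument on pairs of digit strings sharing their top digit, using that for $a\neq a'$ the multiset $\{d(a,h)-d(a',h):h\in\Sk\}$ is balanced; one would then have to control carefully the boundary coupling term $f(x_m,x_{m+1})$ and the exceptional set of integers $n$ whose carries escape the low block, checking that both contribute only lower-order error, uniformly in $(i,j)$.
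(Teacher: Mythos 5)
Your decomposition is exactly the one the paper uses: split $n$ at an intermediate scale, condition on the boundary digit $\alpha=x_{m+1}$, observe that the high digits contribute a \emph{common} shift $C$ to $u_n$ and $u_{n+r}$ (the paper writes $n=m_1k^{N+1}+\alpha k^N+m_2$ and $C=u_{km_1+\alpha}$), control the low-block difference by the fibre argument behind Theorem~\ref{thm:main1}, and close the computation with $\sum_{\ell}C^r_{i-\ell,j-\ell}=1/|G|$. You have also correctly isolated the one ingredient that does not follow from Theorem~\ref{thm:main1}, namely the equidistribution of the single value $C$ over the high blocks, uniformly in the target element of $G$.

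Where you diverge from the paper is in how you propose to get that marginal equidistribution, and this is also where the only real gap sits. The paper does not need a second-moment or pairing argument: it observes (Proposition~\ref{prop:freq}) that the fixed point $\phi^{\omega}(q_0)$ of the $k$-uniform morphism of Remark~\ref{rem:morphic} is \emph{primitive} (this uses the difference condition once, to connect any two states of the DFAO), and that every state $(g,i)$ of $Q=G\times\Sk$ has exactly $k$ preimages under the transition map; hence all states, and therefore all letters of $G$, occur with equal frequency. The same argument, applied to the projection $\tau'(g,i)=g+f(i,\alpha)$, gives the uniformity of $u_{km_1+\alpha}$ that you need. This route is worth adopting for a second reason you have underestimated: your reduction ``to $n\in\lb 0,k^L-1\rb$, the boundary being negligible'' is not negligible, since $N-k^{\lfloor\log_k N\rfloor}$ can be a positive proportion of $N$. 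The paper only evaluates $C^r_{i,j}(N)$ along the subsequence $N=k^{2N'+1}$, and this suffices precisely because primitivity guarantees a priori that the limit $C^r_{i,j}$ exists. Without that (or without partitioning $\lb 0,N\rb$ into base-$k$ blocks and redoing the estimate on each), your argument only computes the limit along powers of $k$. Finally, note the paper's closing trick, which spares you the ``careful control of the exceptional set'' you announce: all the error terms are one-sided, giving only $C^r_{i,j}\geq 1/|G|^2$ for every $(i,j)$, and equality then follows from $\sum_{(i,j)\in G^2}C^r_{i,j}=1$.
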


\begin{remark}\label{rmk:main2}
Tahay obtained several results on the mean value of the discrete correlation coefficients along the integers. The \emph{discrete correlation coefficient} equals 1 if two symbols are identical, and 0 otherwise~\cite[Definition 1]{Ta20}. Theorem~\ref{thm:main2} gives a local result that is uniform in the values of the two symbols.
\end{remark}

\section{Discrete correlations of order 2 of generalised Rudin--Shapiro sequences}\label{Sec:RS2}

The aim of this section is to prove Theorem~\ref{thm:main1} and Theorem~\ref{thm:main2}. Namely, we prove that generalised Rudin--Shapiro sequences have the same discrete correlations of order 2 as i.i.d. sequences of symbols, and give a tight estimate of the speed of convergence. 

\subsection{Frequencies of letters in generalised Rudin--Shapiro sequences}

In this section, we present some first general results on generalised Rudin--Shapiro sequences, that we will need afterwards.

\begin{lemma} A generalised Rudin--Shapiro sequence is a primitive morphic sequence.
\end{lemma}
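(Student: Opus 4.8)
The plan is to prove the statement by exhibiting the block-additive sequence as a morphic sequence and showing that the underlying $k$-uniform morphism is primitive. Recall from Remark~\ref{rem:morphic} that a block-additive sequence of weight function $f$ is the image under the letter-to-letter projection $\tau$ of the fixed point $\phi^\omega(q_0)$, where $Q=G\times\Sk$, $q_0=(0,0)$, and $\phi:Q^*\to Q^*$ is the $k$-uniform morphism given on a state $s=(g,i)$ by $\phi(s)=s_0\cdots s_{k-1}$ with $s_j=(g+f(j,i),j)$. Since $f(0,0)=0$, the state $q_0=(0,0)$ is a prefix of $\phi(q_0)$, so $\phi$ is indeed prolongable at $q_0$ and the fixed point is well-defined. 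Being a letter-to-letter projection of a fixed point of a uniform morphism, the sequence is morphic; it thus remains to establish primitivity of $\phi$, i.e. that there exists $M\in\Ns$ such that for every pair of states $s,s'\in Q$, the letter $s'$ occurs in $\phi^M(s)$.

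First I would analyse how the letters appearing in iterates of $\phi$ evolve. From the formula $s_j=(g+f(j,i),j)$, reading off the second coordinate shows that applying $\phi$ once to any state $(g,i)$ produces, for each $j\in\Sk$, exactly one state whose second coordinate is $j$; hence after one application every possible second coordinate is realised. The key point is to control the first coordinate. Composing the morphism, the first coordinate of a state reached after reading a digit block $j_1\cdots j_m$ starting from $(g,i)$ is $g$ plus a sum of consecutive weights $f(j_1,i)+f(j_2,j_1)+\cdots$; this is precisely the block-additive accumulation. The heart of the argument is that the difference condition on the difference matrix $D=(f(i,j))$ guarantees enough freedom in these weight sums to reach every group element $g'\in G$ in the first coordinate.

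Concretely, I would fix a target state $s'=(g',j)$ and a source state $s=(g,i)$, and show that for a suitable fixed length $M$, one can choose a digit block of length $M$ ending in $j$ so that the accumulated weight, added to $g$, equals $g'$. The natural mechanism is this: fix any digit $h$ with $h\neq j$ (possible since $k\ge|G|\ge 2$ whenever $|G|>1$; the case $|G|=1$ is trivial as $Q$ then has $k$ states all mapping under $\tau$ to the unique element of $G$, and primitivity reduces to the fact that every second coordinate appears). By the difference condition, as the preceding digit $h'$ ranges over $\Sk$, the quantity $d(h',h)-d(j,h)$ (or an analogous single-column weight difference) takes every value of $G$ equally often, so in particular every value is attained. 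Iterating or concatenating such controlled steps, each contributing an adjustable increment covering all of $G$, lets the running first coordinate be steered to any desired residue $g'-g$; appending a final transition landing on second coordinate $j$ completes a path from $s$ to $s'$. Since $Q$ is finite and the bound $M$ can be taken uniform over all pairs $(s,s')$ (for instance $M=|Q|$ or a fixed multiple thereof suffices once reachability is established), $\phi$ is primitive.

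The main obstacle I anticipate is making the weight-steering argument rigorous and genuinely uniform: the difference condition controls differences of two rows of $D$, not a single entry, so I must phrase the increments as differences $d(i,h)-d(j,h)$ and verify that concatenating blocks lets these differences accumulate to an arbitrary element of $G$ in a bounded number of steps. Care is also needed at the boundary between consecutive blocks, where the overlap of the length-$2$ window couples the last digit of one block to the first of the next; I would handle this by choosing blocks that begin and end with a fixed ``buffer'' digit so that the cross-block contributions are constant and do not interfere with the reachability count. Once uniformity of $M$ is secured, primitivity—and hence the claim, since primitivity of the morphism is exactly what the lemma asserts—follows immediately.
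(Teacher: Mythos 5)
Your overall strategy coincides with the paper's: realise the sequence as $\tau(\phi^{\omega}(q_0))$ for the $k$-uniform morphism $\phi$ on $Q=G\times\Sigma_k$ and prove that $\phi$ is primitive by exhibiting, for every pair of states $(g_1,i_1)$, $(g_2,i_2)$, a digit block of fixed length whose accumulated weight equals $g_2-g_1$. You also correctly locate the difficulty: the difference condition only controls \emph{differences} $f(i,h)-f(j,h)$ of two rows, whereas the weight accumulated along a path is a \emph{sum} of entries of $f$, all with the same sign. But your proposal stops exactly there: the sentence ``iterating or concatenating such controlled steps, each contributing an adjustable increment covering all of $G$'' asserts the conclusion without supplying the mechanism, and the ``buffer digit'' remark does not by itself produce a minus sign in front of any $f$-entry. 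This is the heart of the proof and it is missing. The paper's device is to read, from $i_1$, the block $(0,h,0,h,\ldots,0,h,1,0,i_2)$ containing $|G|$ copies of $(0,h)$: the accumulated weight is $f(0,i_1)+|G|f(h,0)+(|G|-1)f(0,h)+f(1,h)+f(0,1)+f(i_2,0)$, and since $|G|x=0$ and $(|G|-1)x=-x$ in $G$, this collapses to $f(1,h)-f(0,h)+\text{const}$; the difference condition applied to the distinct rows $0$ and $1$ then furnishes an $h\in\Sigma_k$ realising any prescribed value. Without this (or an equivalent) telescoping trick, your ``adjustable increment covering all of $G$'' is not justified, because no single free digit in a path contributes a row difference of $f$.

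A secondary, fixable inaccuracy: you claim that once mutual reachability is established, a uniform exponent such as $M=|Q|$ (or a multiple) gives primitivity. Irreducibility of a nonnegative matrix does not imply primitivity; strongly connected but periodic transition graphs are the standard counterexample. You need either connecting paths of one common length for all pairs of states, as the paper arranges (length $2|G|+3$), or an explicit aperiodicity argument, e.g. the self-loop $\delta((g,0),0)=(g,0)$ coming from $f(0,0)=0$, which would also let you pad paths of unequal lengths.
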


\begin{proof} As in the proof of Prop.~\ref{prop:aut}, let $Q=G\times \Sk$, and let $M$ be the matrix indexed by $Q$ and with values in $\{0,1\}$, defined by $M((g,i),(g',i'))=1$ if and only if there exists $j\in \Sk$ such that $(g',i')=(g+f(j,i),j)$. This matrix thus describes the allowed transitions in the DFAO given in the proof of Prop.~\ref{prop:aut}, or equivalently, the incidence matrix of the $k$-uniform morphism defined in Remark~\ref{rem:morphic}. We prove that all the entries of $M^{2|G|+3}$ are positive (the bound might be not optimal). Let $s_1=(g_1,i_1)$ and $s_2=(g_2,i_2)$ be two elements of $Q$. By the difference condition, there exists at least one $h\in G$ such that $f(1,h)-f(0,h)=g_2-g_1-f(0,1)-f(0,i_1)-f(i_2,0).$ From the state $i_1$, let us read in the DFAO the sequence $(0,h,0,h,0,h,\ldots,0,h,0,h,1,0,i_2)$, made of $|G|$ times the pattern $(0,h)$, followed by the pattern $(1,0,i_2)$. Then, the new state will be $s_2$, since
\begin{align*}
&f(0,i_1)+f(h,0)+f(0,h)+f(h,0)+\cdots+f(0,h)+f(h,0)+f(1,h)+f(0,1)+f(i_2,0)\\
&=|G|f(h,0)+(|G|-1)f(0,h)+f(1,h)+f(0,1)+f(0,i_1)+f(i_2,0)\\
&=f(1,h)-f(0,h)+f(0,1)+f(0,s_1)+f(s_2,0)=g_2-g_1.
\end{align*}
The conclusion follows.\end{proof}

\begin{proposition}\label{prop:freq} If $u$ is a generalised Rudin--Shapiro sequence, then any pattern has a frequency in the sequence $u$. Furthermore, the frequency of each element of $G$ (corresponding to patterns of length~1) is equal to ${1/ |G|}$.
\end{proposition}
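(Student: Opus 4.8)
The statement has two parts: first, that every pattern has a frequency, and second, that each single symbol $g\in G$ occurs with frequency $1/|G|$. For the first part, I would invoke the general theory of primitive morphic (equivalently, primitive substitutive) sequences. Since the previous lemma establishes that a generalised Rudin--Shapiro sequence is primitive morphic, and it is classical that every primitive substitution sequence is \emph{linearly recurrent} and hence \emph{uniquely ergodic} --- so that every factor (pattern) admits a well-defined frequency, obtained uniformly over all windows --- the existence of frequencies is immediate. Concretely, I would recall that for a primitive $k$-uniform morphism $\phi$ with incidence matrix $M$, Perron--Frobenius gives a dominant eigenvalue $k$ with strictly positive left and right eigenvectors, and the normalised right eigenvector yields the frequencies of the letters of $Q=G\times\Sk$ in the fixed point $\phi^\omega(q_0)$; frequencies of longer patterns then follow because the letter frequencies in the coding determine all factor frequencies for a uniformly recurrent sequence.

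\textbf{Computing the letter frequencies.}
The real content is the second claim, the value $1/|G|$. I would first compute the frequency of each \emph{state} $(g,i)\in Q$ in the fixed point $\phi^\omega(q_0)$, and then project by $\tau$ to recover the frequency of $g\in G$ in $u$. The key observation is a symmetry of the morphism: from the definition $\phi(g,i)=s_0\cdots s_{k-1}$ with $s_j=(g+f(j,i),j)$, the second coordinate $j$ is simply the position within the image word, so every state $(\cdot,i)$ produces, among its $k$ successors, exactly one successor whose second coordinate equals each fixed $j\in\Sk$. I would argue that the frequency of $(g,i)$ is independent of $g$: summing the contributions, the number of edges in the graph of $M$ landing on states with first coordinate $g$ and second coordinate $j$, coming from all states with second coordinate $i$, is governed by the difference condition. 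Precisely, fixing the second coordinate transition $i\mapsto j$ forces reading the digit $j$, and the map $g\mapsto g+f(j,i)$ is a bijection of $G$; hence the incidence structure is invariant under the $G$-action $g\mapsto g+c$, which forces the Perron--Frobenius eigenvector to be constant on each fibre $\{(g,i):g\in G\}$.

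\textbf{The main obstacle, and how I would close it.}
The crux is to show that the dominant eigenvector assigns equal mass to all $|G|$ states $(g,i)$ sharing a given second coordinate $i$, and that the total mass is correctly distributed among the $k$ values of $i$ so that, after projecting by $\tau(g,i)=g$, every $g\in G$ receives total frequency exactly $1/|G|$. The symmetry argument above gives that the frequency of $(g,i)$ depends only on $i$, say it equals $\nu_i/|G|$ where $\nu_i$ is the frequency attached to second coordinate $i$. Summing over $g\in G$ for fixed $i$ gives frequency $\nu_i$ for ``second coordinate $=i$,'' and summing the projection over all $i$ for fixed $g$ gives $\sum_{i}\nu_i/|G|=1/|G|$, since $\sum_i\nu_i=1$. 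The point I expect to require the most care is justifying rigorously that the $G$-symmetry of the incidence matrix $M$ transfers to the eigenvector: because $M$ commutes with the permutation induced by the free $G$-action $(g,i)\mapsto(g+c,i)$ (which follows from $f(j,i)$ not depending on $g$ and from $g\mapsto g+f(j,i)$ being a bijection for each fixed pair $(i,j)$), the one-dimensional dominant eigenspace guaranteed by Perron--Frobenius must be fixed by this action, whence the eigenvector is constant on $G$-orbits. This uniqueness-of-the-eigenvector step, combined with the explicit fibrewise summation, yields the value $1/|G|$ cleanly and is where I would concentrate the argument.
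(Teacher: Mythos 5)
Your proof is correct, but the decisive step is carried out differently from the paper. For the value $1/|G|$, the paper simply observes that every state $(g,j)\in Q$ has exactly $k$ preimages under the morphism (one arrives at $(g,j)$ from $(g-f(j,i),i)$ for each $i\in\Sk$, by reading the digit $j$); since the morphism is $k$-uniform, the incidence matrix has all row and column sums equal to $k$, so the normalised positive Perron eigenvector is the uniform vector and \emph{every} state of $Q$ has frequency $1/(k|G|)$; projecting by $\tau$ then gives $k\cdot\frac{1}{k|G|}=\frac{1}{|G|}$ for each $g\in G$. You instead exploit the equivariance of the incidence matrix under the free action $(g,i)\mapsto(g+c,i)$ together with the one-dimensionality of the dominant eigenspace, concluding only that the frequency of $(g,i)$ is constant on each fibre $\{(g,i):g\in G\}$ --- weaker than full uniformity over $Q$, but sufficient after summing over $i$. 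Both routes rest on the primitivity lemma (needed for the existence of frequencies and for the uniqueness of the positive eigenvector), and neither uses the difference condition beyond that lemma; the paper's preimage count is slightly more economical and yields the stronger intermediate statement. Two small points to tighten: your aside that the relevant edge count ``is governed by the difference condition'' is a red herring, since the bijectivity of $g\mapsto g+f(j,i)$ is all that is used and holds for any weight function; and the remark that letter frequencies ``determine all factor frequencies'' is not true in general --- the existence of pattern frequencies should be (and, in your opening sentence, correctly is) attributed to unique ergodicity of primitive substitutive sequences, exactly as in the paper.
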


\begin{proof} The existence of the frequencies for all patterns follows from the fact that the sequence $\phi^{\omega}(q_0)\in Q^{\N}$ is a primitive morphic sequence, where $\phi$ is the morphism given in Remark~\ref{rem:morphic}. Furthermore, each element of $Q$ has exactly $k$ preimages, since to state $s=(g,j)\in Q$, one can arrive from the state $(g-f(j,i),i)$, for any $i\in G$ (by reading $j$). So, all the elements of $Q$ have the same frequency in $\phi^{\omega}(q_0)$, and consequently, each element of $G$ has the same frequency in the image of $\phi^{\omega}(q_0)$ by $\tau$.
\end{proof}

\subsection{Fibre of an integer}

We now introduce the notion of \emph{fibre of an integer}, that will be useful in our context to study correlations of order $2$ of generalised Rudin--Shapiro sequences.

Let $r\in\N\setminus\{0\}$ be a fixed integer. For $n\in\N$, let us introduce the representations of $n$ and $n+r$ in base $k$ as follows
\begin{align*}
[n]_k&=x,\\
[n+r]_k&=y.
\end{align*}
We define the integer
$$c_n=\min\{i\in\N : \forall j>i, x_j=y_j\}.$$
Note that  $c_n$ depends on $r$, but that for the sake of shortness, we do not mention this dependence in the notation. The integer $c_n$ measures how far the carry propagates when adding $r$ to $n$. By definition, $x_{c_n}\not=y_{c_n}$ and $\forall j>c_n,$ $x_j=y_j$. We illustrate the definition of $c_n$ below.
\begin{equation}\label{eq:n}
\begin{array}{rccccccc}
[n]_k=&x_0&x_1&\cdots& {\color{blue}x_{c_n}}&x_{c_{n}+1}&x_{c_{n}+2}&\cdots \\
{[n+r]_k}=&y_0&y_1&\cdots& {\color{blue}y_{c_n}}&x_{c_{n}+1}&x_{c_{n}+2}&\cdots
\end{array}
\end{equation}

We define the \emph{fibre} of $n$ as the set
\begin{align*}
\F(n)&=\{m\in\N : x'=[m]_k \mbox{ satisfies } \forall i\in\N\setminus\{c_n+1\},\; x'_i=x_i\}\\
&=\{n+(\alpha-x_{c_n+1}) \, k^{c_n+1} : \alpha\in\Sk\}.
\end{align*}
We have thus
$$
\begin{array}{rccccccc}
\F(n)=\{ \; x_0&x_1&\cdots& x_{c_n}&{\small 0}&x_{c_{n}+2}&x_{c_{n}+3}&\cdots, \\
x_0&x_1&\cdots& x_{c_n}&{\small 1}&x_{c_{n}+2}&x_{c_{n}+3}&\cdots,\\
x_0&x_1&\cdots& x_{c_n}&{\small 2}&x_{c_{n}+2}&x_{c_{n}+3}&\cdots,\\
&&&&\vdots&&&\\
x_0&x_1&\cdots&x_{c_n}&{\small k-1}&x_{c_{n}+2}&x_{c_{n}+3}&\cdots\}.\\
\end{array}
$$
Note that if $m\in\F(n)$, then $c_m=c_n$, so that  
$$m\in \F(n) \iff n\in\F(m).$$ 
Furthermore, let $m\in\F(n)$, and let $x'=[m]_k$, $y'=[m+r]_k$. Then, we have 
$$y'_{c_n+1}=x'_{c_n+1}, \quad \mbox{ and } \quad \forall i\in \N\setminus\{c_n+1\}, \quad y'_i=y_i,$$
as represented below:
\begin{equation}\label{eq:m}
\begin{array}{rccccccc}
[m]_k=\, x'=&x_0&x_1&\cdots& x_{c_n}&x'_{c_{n}+1}&x_{c_{n}+2}&\cdots \\
{[m+r]_k}=\, y'=&y_0&y_1&\cdots& y_{c_n}&x'_{c_{n}+1}&x_{c_{n}+2}&\cdots
\end{array}
\end{equation}

Let $u$ be a block-additive sequence in base $k$ of weight $f$, and recall the notation $\pi=k/|G|$. For $n\in\N$, we also introduce the notation $\Dr(n)=u_{n+r}-u_n.$

\begin{proposition}\label{prop:fibre1} If $u$ is a generalised Rudin--Shapiro sequence, then for any $n\in\N$,  
$$\forall g\in G, \quad \card \{m\in\F(n) : \Dr(m) = g \} = \pi.$$
\end{proposition}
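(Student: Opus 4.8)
The plan is to compute $\Delta_r(m) = u_{m+r} - u_m$ explicitly for each $m \in \F(n)$ and show that, as $m$ ranges over the fibre, this difference takes each value in $G$ exactly $\pi = k/|G|$ times. The key observation is that the elements of $\F(n)$ are obtained by varying a single digit, namely the digit in position $c_n+1$, which we denote $\alpha$. For $m \in \F(n)$ with $[m]_k = x'$ and $[m+r]_k = y'$, the representations $x'$ and $y'$ agree everywhere except in positions $0, \ldots, c_n$, and in position $c_n+1$ they share the common (variable) digit $\alpha = x'_{c_n+1}$, as displayed in~\eqref{eq:m}. The weight $u_m = \sum_i f(x'_i, x'_{i+1})$ and $u_{m+r} = \sum_i f(y'_i, y'_{i+1})$ differ only in those summands involving a digit position where $x'$ and $y'$ disagree, or a position adjacent to the variable digit $\alpha$.

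First I would isolate exactly which terms of the sums defining $u_m$ and $u_{m+r}$ depend on $\alpha$. Since $x'$ and $y'$ coincide for all indices $> c_n$ except that position $c_n+1$ equals $\alpha$ in both, and they agree on position $c_n+1$, the only summands in $\Delta_r(m)$ that involve $\alpha$ come from the two consecutive pairs straddling position $c_n+1$: the pair $(x'_{c_n}, \alpha)$ versus $(y'_{c_n}, \alpha)$, coming from the window at position $c_n$, and the pair $(\alpha, x_{c_n+2})$, which is identical in both $u_m$ and $u_{m+r}$ and hence cancels. I would therefore write
\begin{align*}
\Delta_r(m) &= \sum_{i \le c_n} \bigl( f(y'_i, y'_{i+1}) - f(x'_i, x'_{i+1}) \bigr),
\end{align*}
where the window at index $c_n$ contributes $f(y_{c_n}, \alpha) - f(x_{c_n}, \alpha)$, and all windows at indices $i < c_n$ give a contribution that is independent of $\alpha$ (since the digits $x'_i = x_i$ and $y'_i = y_i$ for $i \le c_n$ do not change across the fibre). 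Collecting the $\alpha$-independent part into a constant $C \in G$, I obtain
\begin{align*}
\Delta_r(m) = C + f(y_{c_n}, \alpha) - f(x_{c_n}, \alpha) = C + d(y_{c_n}, \alpha) - d(x_{c_n}, \alpha),
\end{align*}
writing $d = f$ to emphasize that the weight matrix is a difference matrix.

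The decisive step is then to invoke the difference condition. Recall that by definition of $c_n$ we have $x_{c_n} \ne y_{c_n}$. As $\alpha$ ranges over all of $\Sk$, the difference condition for the difference matrix $D$ with $i = y_{c_n}$, $j = x_{c_n}$ (distinct!) guarantees that
\begin{align*}
\card\Big\{ \alpha \in \Sk : d(y_{c_n}, \alpha) - d(x_{c_n}, \alpha) = h \Big\} = \frac{k}{|G|} = \pi
\end{align*}
for every $h \in G$. Since $\alpha \mapsto m$ is a bijection between $\Sk$ and $\F(n)$, and $\Delta_r(m) = g$ is equivalent to $d(y_{c_n}, \alpha) - d(x_{c_n}, \alpha) = g - C$, each value $g \in G$ is attained by exactly $\pi$ elements of the fibre. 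This yields the claimed count $\card\{ m \in \F(n) : \Delta_r(m) = g \} = \pi$ for all $g$.

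The main obstacle I anticipate is the careful bookkeeping in the bracketed step above: I must verify that the windows at positions $i > c_n$ genuinely cancel in $\Delta_r(m)$, which requires checking both the shared tail digits and, crucially, that the pair $(\alpha, x_{c_n+2})$ appears identically in $u_m$ and $u_{m+r}$ (it does, because $y'_{c_n+1} = x'_{c_n+1} = \alpha$ and $y'_{c_n+2} = x_{c_n+2}$). I must also confirm that the window at position $c_n$ is the \emph{only} place where the variable $\alpha$ meets a digit that differs between the two representations, so that the $\alpha$-dependence reduces to the single difference $d(y_{c_n}, \alpha) - d(x_{c_n}, \alpha)$. Once this localisation of the $\alpha$-dependence is established, the difference condition does all the remaining work automatically, and no delicate estimates are needed.
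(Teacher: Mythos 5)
Your proof is correct and follows essentially the same route as the paper: localise the $\alpha$-dependence of $\Dr(m)$ to the single window at position $c_n$, reduce to counting $\alpha\in\Sk$ with $f(y_{c_n},\alpha)-f(x_{c_n},\alpha)$ equal to a shifted target, and apply the difference condition to the distinct pair $(y_{c_n},x_{c_n})$. The only cosmetic difference is that the paper organises the computation as $\Dr(m)-\Dr(n)$ with an explicit constant $A_n$, whereas you absorb the $\alpha$-independent terms into an unspecified constant $C$; the two are equivalent.
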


\begin{proof} By definition of a block-additive sequence, with the notations of~\eqref{eq:n}, we have
\begin{align*}
\Dr(n)&=\sum_{i\in\N} f(y_i,y_{i+1}) - \sum_{i\in\N} f(x_i,x_{i+1})\\  
&=\sum_{i=0}^{c_n} \Big ( f(y_i,y_{i+1}) - f(x_i,x_{i+1})\Big ).
\end{align*}
If $m\in\F(n)$, with the notations of \eqref{eq:m}, we have $$\Dr(m)=\sum_{i=0}^{c_n} (f(y'_i,y'_{i+1})-f(x'_i,x'_{i+1})),$$ so that
\begin{align*}
\Dr(m)-\Dr(n)&=\Big ( f(y'_{c_n},y'_{c_n+1}) - f(x'_{c_n},x'_{c_n+1}) \Big )
- \Big ( f(y_{c_n},y_{c_n+1}) - f(x_{c_n},x_{c_n+1}) \Big )\\
&=\Big ( f(y_{c_n},x'_{c_n+1}) - f(x_{c_n},x'_{c_n+1}) \Big )
- \Big ( f(y_{c_n},x_{c_n+1}) - f(x_{c_n},x_{c_n+1}) \Big )
\end{align*}
It follows that for all $g\in G$, 
$$\card \{m\in\F(n) : \Dr(m)-\Dr(n) =g \} = 
\card \Big \{ \alpha\in \Sk : f(y_{c_n},\alpha) - f(x_{c_n},\alpha) - A_n = g \Big\},$$
with $A_n=f(y_{c_n},x_{c_n+1}) - f(x_{c_n},x_{c_n+1}).$

Consequently, if $u$ is a generalised Rudin--Shapiro sequence, then for any $n\in\N$, we have
$$\forall g\in G, \quad \card \{m\in\F(n) : \Dr(m)-\Dr(n) = g \} = \pi,$$ 
and Prop.~\ref{prop:fibre1} follows.
\end{proof}

\subsection{Proof of Theorem~\ref{thm:main1}}

Using the notion of fibre developed above, we obtain the following proposition, from which Theorem~\ref{thm:main1} directly follows, since $\sum_{g\in G} \card\Big \{n \in \lb 0, N-1\rb : \Dr(n) = g \Big \}=N$. 

\begin{proposition}\label{prop:fibresRS} If $u$ is a generalised Rudin--Shapiro sequence, then for any $g\in G$,
\begin{align*}
\card\Big \{n \in \lb 0, N-1\rb : \Dr(n) = g \Big \} & \geq {\pi N\over k} - \pi \, r\, k - \pi \, r \, \sigma_k(N)\\
& \geq {N\over |G|} - \pi \, r\, k (1+\log_k(N)).\\
\end{align*}
\end{proposition}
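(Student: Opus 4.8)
The plan is to partition the interval $\lb 0, N-1\rb$ into fibres and apply Proposition~\ref{prop:fibre1} to each complete fibre. Recall that each fibre $\F(n)$ has exactly $k$ elements (obtained by varying the digit $x_{c_n+1}$ over all of $\Sk$), and by Proposition~\ref{prop:fibre1}, on a complete fibre the value $\Dr(m)=g$ is attained exactly $\pi=k/|G|$ times, for every $g\in G$. Since the fibre relation is symmetric ($m\in\F(n)\iff n\in\F(m)$) and $m\in\F(n)\implies c_m=c_n$, the fibres form a partition of $\N$ into blocks of size $k$. The key idea is that if \emph{every} fibre meeting $\lb 0,N-1\rb$ were entirely contained in $\lb 0, N-1\rb$, then the count of $n$ with $\Dr(n)=g$ would be exactly $\pi$ times the number of fibres, i.e. exactly $\pi N/k=N/|G|$. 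The error terms in the statement must therefore come from the fibres that straddle the boundary at $N$ (those only partially contained in $\lb 0,N-1\rb$), which contribute at most $\pi$ spurious counts each, or rather fail to contribute their guaranteed $\pi$.

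First I would make precise which fibres are ``cut'' by the window $\lb 0, N-1\rb$. A fibre $\F(n)$ consists of integers that agree with $n$ in all base-$k$ digits except possibly at position $c_n+1$, where the digit ranges over $\Sk$. Such a fibre fails to lie entirely below $N$ precisely when altering the digit at position $c_n+1$ can push an element above $N-1$; this can only happen for fibres whose ``free'' position $c_n+1$ is large enough that changing that digit affects digits at or above the most significant differing digit of $N$. I would count these boundary fibres by bounding the number of distinct positions $c_n+1$ that can be problematic and, for each such position, the number of fibres that cross the threshold. The quantity $c_n$ measures the carry length when adding $r$ to $n$; a crude but sufficient bound is that $c_n\le \ell_n + \log_k r$ or similar, which controls how large $c_n$ can be relative to the position of the leading digit of $n$.

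The main obstacle, and the step requiring the most care, is the bookkeeping that produces the specific error term $\pi r k + \pi r \sigma_k(N)$. The two summands suggest a split: one contribution of size $O(\pi r k)$ from ``low-order'' boundary effects (fibres whose free position is near the bottom, related to the carry length governed by $r$), and one contribution of size $O(\pi r \sigma_k(N))$ coming from each nonzero digit of $N$ in its base-$k$ expansion, since the sum-of-digits function $\sigma_k(N)$ naturally enumerates the ``descent'' positions of the boundary $N$ in a digit-by-digit comparison. I would isolate the set $B=\{n\in\lb0,N-1\rb : \F(n)\not\subseteq\lb0,N-1\rb\}$ of indices lying in cut fibres, show $\card(B)\le r k + r\,\sigma_k(N)$ by the digit analysis above, and then argue that on the complement $\lb0,N-1\rb\setminus B$, which is a union of complete fibres, each value $g$ is attained exactly $\pi$ times per fibre. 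This yields
\begin{align*}
\card\Big\{n\in\lb0,N-1\rb : \Dr(n)=g\Big\}
&\ge \pi\cdot\frac{N-\card(B)}{k}
\ge \frac{\pi N}{k} - \pi\,r\,k - \pi\,r\,\sigma_k(N),
\end{align*}
using $\pi/k = 1/|G|$ and $\card(B)\le rk + r\sigma_k(N)$.

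Finally, to obtain the second, cleaner inequality I would bound $\sigma_k(N)$ crudely: the base-$k$ expansion of $N$ has at most $1+\log_k(N)$ digits, each at most $k-1<k$, so $\sigma_k(N)\le (k-1)(1+\log_k N)$; a slightly looser bound of the form $\sigma_k(N)\le k(1+\log_k N)$ or an absorption of the constant lets one combine $\pi r k + \pi r \sigma_k(N)$ into $\pi r k(1+\log_k N)$. The remaining task is purely a matter of verifying the constants line up, after which Theorem~\ref{thm:main1} follows immediately by summing over $g\in G$ as indicated, since the total count over all $g$ is exactly $N$ and hence each individual deficit below $N/|G|$ is matched by a corresponding surplus bounded by the same error term.
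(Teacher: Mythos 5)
Your overall strategy is exactly the paper's: the fibres are equivalence classes of size $k$, Proposition~\ref{prop:fibre1} gives exactly $\pi$ occurrences of each $g$ on every fibre wholly contained in $\lb 0,N-1\rb$, and the error term comes from the fibres cut by the boundary. (The paper phrases this as a lower bound on the set of $n$ with $\F(n)\subset\lb 0,N-1\rb$ rather than an upper bound on its complement $B$, but that is the same computation.) The problem is that the one step carrying all the content of the proposition --- the digit-by-digit count of the boundary set --- is not actually performed. You assert $\card(B)\le rk+r\,\sigma_k(N)$ on the strength of a heuristic (``low-order boundary effects'' plus ``one contribution per nonzero digit of $N$''), but no argument is given that identifies which $n$ lie in cut fibres, and the claimed constant is not obviously attainable: what the paper actually proves is that the complement of its explicit good set has size less than $r\,k^2+r\,k\,\sigma_k(N)$, a factor of $k$ weaker than your claim. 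Concretely, to make your sketch into a proof you must (i) characterise the $n$ whose fibre stays below $N$ --- in the paper these are the $n$ agreeing with $[N]_k$ above some position $p$, having digit $<a_p$ at position $p$, a free digit at position $p-1$, and a tail $\gamma<k^{p-1}-r$ forcing $c_n\le p-2$ --- and (ii) sum the resulting counts $a_p\,k\,(k^{p-1}-r)$ over $p$, which is where the terms $r\,k^2$ and $r\,k\,\sigma_k(N)$ actually come from. Without this, the bound on $\card(B)$ is an unproved claim, not a lemma.

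Two smaller points. First, your final display is arithmetically safe: even with the weaker (provable) bound $\card(B)< rk^2+rk\,\sigma_k(N)$ one gets $\pi(N-\card(B))/k\ge \pi N/k-\pi r k-\pi r\,\sigma_k(N)$, which is precisely the first inequality of the statement, so you do not need your sharper constant --- you should simply aim for the weaker bound. Second, your handling of the carry is the right intuition but needs the precise fact that for $c_n$ to exceed $\ell_r$ the low-order block of digits of $n$ must lie within $r$ of a power of $k$ (this is what the condition $\gamma<k^{p-1}-r$ encodes); the vague inequality ``$c_n\le \ell_n+\log_k r$ or similar'' is not the relevant control and would not by itself produce the $\sigma_k(N)$ term.
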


\begin{proof} Let $N\in\Ns$, and let $a=[N]_k$. We determine the conditions under which an integer $n\in\lb 0, N-1\rb$ satisfies $\F(n)\subset\lb 0, N-1\rb$. Recall the notation $\ell_N=\min\{i\in \N : \forall j>i, a_i=0\}$. We can thus write
$$[N]_k= a_0 \; a_1 \; \cdots \; a_{\ell_N-1} \; a_{\ell_N} \; 0 \; 0 \; \cdots $$

\begin{itemize}
\item If $n=a'_{\ell_N}\, k^{\ell_N}+\alpha\, k^{\ell_N-1}+\gamma$, for some $\alpha\leq k-1$, $a'_{\ell_N}<a_{\ell_N}$, and $\gamma<k^{\ell_N-1}-r$, then $c_n\leq \ell_N-2$, so that $\F(n)\subset\lb 0, N-1\rb$.
\begin{align*}
[n]_k&= \underbrace{x_0 \; x_1 \; \cdots \; x_{\ell_N-2}}_{\gamma<k^{\ell_{N}-1}-r} \; \alpha \; \underbrace{a'_{\ell_N}}_{<a_{\ell_N}} \; 0 \; 0 \; \cdots \\
[n+r]_k&= x'_0 \; x'_1 \; \cdots \; x'_{\ell_N-2} \;\; \alpha \;\; a'_{\ell_N} \;\; 0 \; 0 \; \cdots \\
\end{align*}

\item If $n=a_{\ell_N}\, k^{\ell_N}+a'_{\ell_N-1}\, k^{\ell_N-1}+\alpha\, k^{\ell_N-2}+\gamma$, for some $\alpha\leq k-1$, $a'_{\ell_N-1}<a_{\ell_N-1}$, and $\gamma<k^{\ell_N-2}-r$, then $c_n\leq \ell_N-3$, so that $\F(n)\subset\lb 0, N-1\rb$.
\begin{align*}
[n]_k&= \underbrace{x_0 \; x_1 \; \cdots \; x_{\ell_N-3}}_{\gamma<k^{\ell_N-2}-r} \; \alpha \; \underbrace{a'_{\ell_N-1}}_{<a_{\ell_N-1}} \; a_{\ell_N} \; 0 \; 0 \; \cdots \\
[n+r]_k&= x'_0 \; x'_1 \; \cdots \; x'_{\ell_N-3} \;\; \alpha \;\; a'_{\ell_N-1} \;\; a_{\ell_N} \; 0 \; 0 \; \cdots \\
\end{align*}

\item If $n=a_{\ell_N}\, k^{\ell_N}+a_{\ell_N-1}\, k^{\ell_N-1}+a'_{\ell_N-2}\, k^{\ell_N-2}+\alpha\, k^{\ell_N-3}+\gamma$, for some $\alpha\leq k-1$, $a'_{\ell_N-2}<a_{\ell_N-2}$, and $\gamma<k^{\ell_{N-3}}-r$, then $c_n\leq \ell_N-4$, so that $\F(n)\subset\lb 0, N\rb$.
\begin{align*}
[n]_k&= \underbrace{x_0 \; x_1 \; \cdots \; x_{\ell_N-4}}_{\gamma<k^{\ell_N-3}-r} \; \alpha \; \underbrace{a'_{\ell_N-2}}_{<a_{\ell_N-2}} \; a_{\ell_N-1} \; a_{\ell_N} \; 0 \; 0 \; \cdots \\
[n+r]_k&= x'_0 \; x'_1 \; \cdots \; x'_{\ell_N-4} \;\; \alpha \;\; a'_{\ell_N-2}  \;\; a_{\ell_N-1} \; a_{\ell_N} \; 0 \; 0 \; \cdots \\
\end{align*}

\item And so on, the last condition that will be of interest for us being that if $n=a_{\ell_N}\, k^{\ell_N}+a_{\ell_N-1}\, k^{\ell_N-1}+\ldots+a_{\ell_r+3}\, k^{\ell_r+3} + a'_{\ell_r+2} \, k^{\ell_r+2} + \alpha \, k^{\ell_r+1} + \gamma$, for some $\alpha\leq k-1$, $a'_{\ell_r+2}<a_{\ell_r+2}$, and $\gamma<k^{\ell_r+1}-r$, then $c_n\leq \ell_r$, so that $\F(n)\subset\lb 0, N-1\rb$.
\end{itemize}

The number of different integers $n\in\lb 0, N-1\rb$ satisfying $\F(n)\subset\lb 0, N-1\rb$ that we have exhibited above is equal to
\begin{align*}
\; & \; a_{\ell_N}k (k^{\ell_N-1}-r) + a_{\ell_{N-1}}k (k^{\ell_N-2}-r)+a_{\ell_{N-2}}k (k^{\ell_N-3}-r)+ \ldots + a_{\ell_{r}+2}k (k^{\ell_r+1}-r)\\
= \; & \; N-(a_{\ell_r+1}k^{\ell_r+1}+a_{\ell_r}k^{\ell_r}+\ldots+a_1k+a_0)-r\, k\, (a_{\ell_N}+a_{\ell_N-1}+a_{\ell_N-2}+\ldots+a_{\ell_r+2})\\
> \; & \; N-r \, k^2 - r \, k \, \sigma_k(N).
\end{align*}
For the last inequality, observe that $a_{\ell_r+1}k^{\ell_r+1}+a_{\ell_r}k^{\ell_r}+\ldots+a_1k+a_0<k^{\ell_r+2}\leq r\, k^2$. 
Prop.~\ref{prop:fibresRS} then directly follows from Prop.~\ref{prop:fibre1}. \end{proof}

\subsection{Correlation matrix}

In order to prove Theorem~\ref{thm:main2}, we first introduce the notion of correlation matrix, and formulate the previous results using this terminology.

Let $u\in G^{\N}$ be a fixed sequence. For $r\in\Ns, (i,j) \in G^2$ and $n\in\N$, we define
$$\dr_{i,j}(n)=
\begin{cases} 
1 & \mbox{ if } (u_n,u_{n+r})=(i,j),\\
0 & \mbox{ otherwise.}
\end{cases}$$
and 
$$C^r_{i,j}(N)={1\over N} \sum_{n=0}^{N-1} \dr_{i,j}(n).$$

As a consequence of Prop.~\ref{prop:freq}, if $u$ is a generalised Rudin--Shapiro sequence, then for any $r\in\Ns$ and $(i,j)\in G^2$, the sequence $C^r_{i,j}(N)$ converges when $N$ goes to infinity, so that we can also introduce
$$C^r_{i,j}=\lim_{N\to\infty}C^r_{i,j}(N).$$
Furthermore, again by Prop.~\ref{prop:freq}, for any $i\in G$, the asymptotic frequency of the symbol $i$ is 
$$\sum_{j\in G} C^r_{i,j} = {1\over |G|}.$$

As a consequence of Prop.~\ref{prop:fibresRS}, we obtain the following results.

\begin{corollary}\label{cor:corrRS} 
If $u$ is a generalised Rudin--Shapiro sequence, then for any $(i,j)\in G^2$,
$$\sum_{\ell\in G} C^r_{i-\ell,j-\ell}(N) \geq {1\over |G|} - \pi \, r\, k {1+\log_k(N) \over N}.$$
\end{corollary}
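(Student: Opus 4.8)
The plan is to express the quantity $\sum_{\ell\in G} C^r_{i-\ell,j-\ell}(N)$ as a frequency of integers $n$ satisfying a single difference condition on $u$, and then to invoke Proposition~\ref{prop:fibresRS} directly. First I would observe that the event $\bigcup_{\ell\in G}\{(u_n,u_{n+r})=(i-\ell,j-\ell)\}$ is exactly the event that the \emph{difference} $u_{n+r}-u_n$ takes a prescribed value. Indeed, if $(u_n,u_{n+r})=(i-\ell,j-\ell)$ for some $\ell\in G$, then $u_{n+r}-u_n=(j-\ell)-(i-\ell)=j-i$, independently of $\ell$; conversely, if $u_{n+r}-u_n=j-i$, then setting $\ell=i-u_n$ gives $u_n=i-\ell$ and $u_{n+r}=u_n+(j-i)=j-\ell$. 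Hence the sets indexed by distinct $\ell$ are pairwise disjoint (each fixes $u_n=i-\ell$), and their union is precisely $\{n:\Delta_r(n)=j-i\}$.

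With this identification in hand, the key computation is
\begin{align*}
\sum_{\ell\in G} C^r_{i-\ell,j-\ell}(N)
&= \frac{1}{N}\sum_{n=0}^{N-1}\sum_{\ell\in G}\delta^r_{i-\ell,j-\ell}(n)\\
&= \frac{1}{N}\,\card\Big\{n\in\lb 0,N-1\rb : \Delta_r(n)=j-i\Big\},
\end{align*}
where the passage from the first to the second line uses the disjointness established above so that the inner sum over $\ell$ is the indicator of $\{\Delta_r(n)=j-i\}$. I would then apply Proposition~\ref{prop:fibresRS} with the group element $g=j-i$, which gives the lower bound $\card\{n\in\lb 0,N-1\rb:\Delta_r(n)=j-i\}\geq \frac{N}{|G|}-\pi\,r\,k(1+\log_k(N))$. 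Dividing by $N$ yields exactly the claimed inequality.

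I do not expect any serious obstacle here: the statement is essentially a translation of Proposition~\ref{prop:fibresRS} through the definition of the correlation matrix. The one point requiring genuine care is the bijective bookkeeping in the first step — checking that the reindexing $\ell\mapsto(i-\ell,j-\ell)$ sweeps out, bijectively and disjointly, all pairs $(a,b)\in G^2$ with $b-a=j-i$, so that summing $C^r_{a,b}$ over this diagonal coincides with counting the difference event. This is where one must be attentive that $G$ is an additive group and that the map $\ell\mapsto i-\ell$ is a bijection of $G$; once that is clear, the error term carries over verbatim from Proposition~\ref{prop:fibresRS}.
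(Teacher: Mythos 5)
Your proposal is correct and follows exactly the route the paper intends: the paper states Corollary~\ref{cor:corrRS} as an immediate consequence of Proposition~\ref{prop:fibresRS}, and your argument simply makes explicit the identification $\sum_{\ell\in G}\delta^r_{i-\ell,j-\ell}(n)=\mathbf{1}_{\{\Delta_r(n)=j-i\}}$ (via the disjoint union over $\ell$) before applying that proposition with $g=j-i$ and dividing by $N$. No gaps; the bookkeeping you flag is handled correctly since $\ell\mapsto i-\ell$ is a bijection of $G$.
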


\begin{corollary}\label{cor:corrRSdiff} If $u$ is a generalised Rudin--Shapiro sequence, then for any $(i,j)\in G^2$,
$$\sum_{\ell\in G} C^r_{i-\ell,j-\ell} = {1\over |G|}.$$
\end{corollary}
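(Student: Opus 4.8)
The plan is to derive Corollary~\ref{cor:corrRSdiff} from Corollary~\ref{cor:corrRS} by taking the limit $N\to\infty$ together with the normalisation constraint coming from Proposition~\ref{prop:freq}. First I would observe that each quantity $C^r_{i-\ell,j-\ell}(N)$ converges as $N\to\infty$ (this is guaranteed by the existence of frequencies established via primitivity), so the finite sum $\sum_{\ell\in G} C^r_{i-\ell,j-\ell}(N)$ converges to $\sum_{\ell\in G} C^r_{i-\ell,j-\ell}$. Passing to the limit in the inequality of Corollary~\ref{cor:corrRS}, the error term $\pi\,r\,k\,(1+\log_k(N))/N$ tends to $0$, yielding the lower bound
\begin{equation*}
\sum_{\ell\in G} C^r_{i-\ell,j-\ell} \;\geq\; {1\over |G|} \qquad \text{for every } (i,j)\in G^2.
\end{equation*}

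The remaining task is to upgrade this family of lower bounds to equalities. The key idea is a summation (counting) argument: I would sum the left-hand side over a suitable set of pairs $(i,j)$ and compare against a global total of $1$. Concretely, for each fixed value of $s = i - j \in G$, the quantities $\sum_{\ell\in G} C^r_{i-\ell,j-\ell}$ as $(i,j)$ ranges over pairs with $i-j = s$ partition the correlation data, and summing all the $C^r_{i',j'}$ over all $(i',j')\in G^2$ gives exactly $\sum_{(i',j')\in G^2} C^r_{i',j'} = \sum_{i'\in G}\sum_{j'\in G} C^r_{i',j'} = \sum_{i'\in G} {1\over |G|} = 1$, using the frequency identity $\sum_{j'\in G} C^r_{i',j'} = 1/|G|$ from the discussion preceding the corollaries. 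Since the full double sum equals $1$ and decomposes into $|G|$ sums (one for each coset value $s$) each of which is bounded below by $1/|G|$, and since these $|G|$ lower bounds already sum to $1$, no slack is possible: every one of the inequalities must in fact be an equality.

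The main obstacle, and the only step requiring care, is verifying that the map $(i,j)\mapsto(i-\ell,j-\ell)$ organises the nine (in general $|G|^2$) correlation coefficients into exactly $|G|$ groups that are pairwise disjoint and jointly exhaustive, so that the total sum telescopes correctly to $1$ without double-counting. This is essentially a bookkeeping claim about the cosets of the diagonal subgroup $\{(\ell,\ell):\ell\in G\}$ in $G^2$: for a fixed difference $s=i-j$, the sum $\sum_{\ell\in G}C^r_{i-\ell,j-\ell}$ ranges over precisely the pairs $(i',j')$ with $i'-j'=s$, and as $s$ varies over $G$ these cosets tile $G^2$. I would state this partition explicitly and then conclude by the standard rigidity principle (a sum of terms each at least its share, whose total equals the combined shares, forces equality throughout).
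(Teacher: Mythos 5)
Your proposal is correct and follows essentially the same route as the paper: pass to the limit in Corollary~\ref{cor:corrRS} to get the lower bound $\sum_{\ell\in G} C^r_{i-\ell,j-\ell}\geq 1/|G|$, then observe that the $|G|$ diagonal-coset sums partition $\sum_{(i,j)\in G^2}C^r_{i,j}=1$, so each inequality is forced to be an equality. The paper compresses this into one line, while you spell out the coset partition and the limit-existence step, but the argument is the same.
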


\begin{proof} 
It is a consequence from Cor.~\ref{cor:corrRS} and the observation that $\sum_{(i,j)\in G^2} C^r_{i,j}=1$.
\end{proof}

Note that this result refines the estimates of Tahay concerning the discrete correlation coefficient (cf. Remark~\ref{rmk:main2}) that detects whether two symbols differ or not. In our language, he proved that
$$\sum_{i\in G} C^r_{i,i} = {1\over |G|}.$$

\subsection{Proof of Theorem~\ref{thm:main2}}

With the notations above, Theorem~\ref{thm:main2} is equivalent to next proposition, that we now prove. Note that this result is stronger than Corollary~\ref{cor:corrRSdiff} as it gives the values of the individual terms in the sum.

\begin{proposition}\label{prop:cormat} If $u$ is a generalised Rudin--Shapiro sequence, then for any $(i,j)\in G^2$, 
$$C^r_{i,j}= {1\over |G|^2}.$$
\end{proposition}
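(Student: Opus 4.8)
The plan is to deduce Proposition~\ref{prop:cormat} from Corollary~\ref{cor:corrRSdiff} by proving that the correlation matrix is \emph{constant along broken diagonals}, i.e. that $C^r_{i,j}=C^r_{i+\ell,j+\ell}$ for every $\ell\in G$. Granting this, fix $(i,j)$: by Corollary~\ref{cor:corrRSdiff} the broken diagonal $\{(i-\ell,j-\ell):\ell\in G\}$ carries total mass $1/|G|$, and by the claimed invariance all its $|G|$ entries are equal, so each equals $1/|G|^2$; taking $\ell=0$ gives $C^r_{i,j}=1/|G|^2$. Thus everything reduces to the translation invariance of the joint law of $(u_n,u_{n+r})$ under simultaneous shifts of both coordinates.

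To realise the shift by a given $\ell\in G$, I would first invoke Proposition~\ref{prop:freq}: since every element of $G$ has positive frequency, each $\ell$ occurs as a value $u_{m_\ell}=\ell$ for some integer $m_\ell$, whose base-$k$ representation is a finite digit pattern. For a fixed $N$, set $t=t(N)=\lceil\log_k(N+r)\rceil+2$ and $M=m_\ell\,k^{t}$, and consider the map $n\mapsto n+M$. For every $n\in\lb 0,N-1\rb$ the digits of $[n]_k$ and $[n+r]_k$ vanish at all positions $\geq t-1$, so adding $M$ simply writes the pattern of $m_\ell$ at positions $\geq t$, separated from the rest by a block of zeros. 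Because $f(0,0)=0$, this zero gap prevents any new cross-block, and one checks from Definition~\ref{def:blockadd} that
\begin{align*}
u_{n+M}&=u_n+u_{m_\ell}=u_n+\ell, & u_{n+r+M}&=u_{n+r}+u_{m_\ell}=u_{n+r}+\ell.
\end{align*}
Hence $n\mapsto n+M$ is a bijection of $\lb 0,N-1\rb$ onto $\lb M,M+N-1\rb$ carrying the event $(u_n,u_{n+r})=(i,j)$ exactly onto $(u_{n'},u_{n'+r})=(i+\ell,j+\ell)$, so that $\card\{n\in\lb 0,N-1\rb:(u_n,u_{n+r})=(i,j)\}=\card\{n'\in\lb M,M+N-1\rb:(u_{n'},u_{n'+r})=(i+\ell,j+\ell)\}$.

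Dividing by $N$, the left-hand frequency tends to $C^r_{i,j}$. The subtle point is that the right-hand side is a frequency over a \emph{far-away} window, with $M=M(N)\to\infty$ faster than $N$, so one cannot discard it as a negligible tail. Here I would use that the defining sequence $\phi^{\omega}(q_0)$ is primitive morphic (the Lemma), hence its subshift is uniquely ergodic; consequently the Birkhoff averages of the cylinder indicator $n\mapsto \mathbf 1[(u_n,u_{n+r})=(i+\ell,j+\ell)]$ over any window $\lb M,M+N-1\rb$ converge, uniformly in $M$, to the common value $C^r_{i+\ell,j+\ell}$. Equating the two limits yields $C^r_{i,j}=C^r_{i+\ell,j+\ell}$, which is the desired invariance. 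I expect the main obstacle to be precisely this last step: the single-fibre computations of Proposition~\ref{prop:fibre1} control $\Dr$ through the difference condition but cannot decouple it from the raw value $u_n$ (whose dependence on the ``first-coordinate'' digit structure the difference condition does not govern), so one is forced to pass to the global, ergodic-theoretic argument and to justify the uniform convergence of window averages for the translated blocks.
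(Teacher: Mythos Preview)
Your reduction to the diagonal invariance $C^r_{i,j}=C^r_{i+\ell,j+\ell}$ is sound, and combining it with Corollary~\ref{cor:corrRSdiff} does give the result. There is one small slip: the identity $u_{n+M}=u_n+u_{m_\ell}$ is not quite right, because the zero gap only kills one of the two junction blocks. With $t$ chosen as you did, the block at position $t-1$ is $(0,x_0^{(m_\ell)})$, so in fact
\[
u_{n+M}=u_n+f(0,x_0^{(m_\ell)})+u_{m_\ell},\qquad u_{n+r+M}=u_{n+r}+f(0,x_0^{(m_\ell)})+u_{m_\ell}.
\]
The extra term is the same in both coordinates, so your bijection still shifts along a diagonal, only by $c:=f(0,x_0^{(m_\ell)})+u_{m_\ell}$ rather than $u_{m_\ell}$. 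To recover every $\ell\in G$ one can either pick $m_\ell$ with least significant digit $0$ and $u_{m_\ell}=\ell$ (such $m_\ell$ exist because, by the primitivity Lemma, the automaton reaches every state $(\ell,0)$ from $q_0$), or simply observe that the achievable shifts form a subgroup containing all such $c$. With this correction the argument goes through; your appeal to unique ergodicity of the primitive morphic subshift to control the far-away windows $\lb M,M+N-1\rb$ is legitimate and is indeed the crux.

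The paper takes a different route. Rather than fixing one $m_\ell$ per shift and moving to a distant window, it decomposes every $n\in\lb 0,k^{2N+1}-1\rb$ as $n=m_1k^{N+1}+\alpha k^N+m_2$ and uses block-additivity to write $(u_n,u_{n+r})=(u_{km_1+\alpha},u_{km_1+\alpha})+(u_{\alpha k^N+m_2},u_{\alpha k^N+m_2+r})$ whenever $m_2<k^N-r$. It then shows directly that $u_{km_1+\alpha}$ is equidistributed in $G$ as $m_1$ varies (a frequency computation on $\phi^\omega(q_0)$, entirely within the paper's toolkit), while the second summand is governed by a variant of Corollary~\ref{cor:corrRSdiff}; summing over $\alpha$ and $\ell$ yields $C^r_{i,j}\geq 1/|G|^2$, hence equality. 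In effect the paper \emph{averages} over all admissible shifts at once, whereas you realise each shift individually and invoke unique ergodicity as a black box. Your argument is shorter and makes the diagonal symmetry explicit; the paper's is self-contained and avoids importing the uniform-in-$M$ convergence of Birkhoff averages from outside.
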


\begin{proof} 
Let us fix some $\alpha\in\Sk$ and consider the integers $n\in \lb 0, k^{2N+1}-1\rb$ that are such that the base-$k$ decomposition $x=[n]_k$ of $n$ satisfies $x_{N+1}=\alpha$. In other words, $n=m_1\, k^{N+1} + \alpha \, k^{N} + m_2$, for some integers $m_1,m_2\in \lb 0, k^{N}-1\rb$. 
Assuming furthermore that $m_2<k^N-r$, we will have $c_n<N$, so that
$$(u_n,u_{n+r})=(u_{km_1+\alpha},u_{km_1+\alpha})+(u_{\alpha k^{N} + m_2},u_{\alpha k^{N} + m_2+r}),$$
by definition of a block-additive sequence. 

The proof will be based on the following idea: when taking independently at random some integers $m_1,m_2$ uniformly distributed in $\lb 0, k^{N}-1\rb$, the distribution of $u_{km_1+\alpha}$ converges to the uniform distribution on $G$ when $N$ goes to infinity, while for the second term $(u_{\alpha k^{N} + m_2},u_{\alpha k^{N} + m_2+r})$, the distribution is asymptotically given by the values $C_{i,j}$ of the correlation matrix. 
Now, we have $(u_n,u_{n+r})=(i,j)$ if $u_{km_1+\alpha}=\ell$ for some $\ell$ and $(u_{\alpha k^{N} + m_2},u_{\alpha k^{N} + m_2+r})=(i-\ell,j-\ell)$. Using the independence of $m_1$ and $m_2$, we thus obtain
$$C^r_{i,j}=\sum_{\alpha\in \Sk}  {1\over k} \sum_{\ell \in G} {1\over |G|} C_{i-\ell,j-\ell}=\sum_{\alpha\in \Sk}  {1\over k}{1\over |G|} {1\over |G|}={1\over |G|^2},$$
since we already know by Corollary~\ref{cor:corrRSdiff} that for any $(i,j)\in G^2, \sum_{\ell\in G} C_{i-\ell,j-\ell}={1\over |G|}$.

More formally, let us introduce the following notations, for any $i,j,\ell\in G$,
\begin{align*}
A^{\alpha}_{\ell}(N)&=\card\{m\in\lb 0, k^N-1\rb : u_{km+\alpha}=\ell\}\\
B^{r,\alpha}_{i,j}(N)&=\card\{m\in\lb 0, k^N-r-1\rb : \delta_{i,j}^r(\alpha\, k^{N}+m)=1\}.
\end{align*}
We claim that for any $\alpha\in \Sk$,
$$\lim_{N\to\infty} {A^{\alpha}_{\ell}(N)\over k^N}={1\over |G|}, \quad \mbox{ and } \quad \lim_{N\to\infty} \sum_{\ell\in G} {B^{r,\alpha}_{i-\ell,j-\ell}(N)\over k^N}={1\over |G|}.$$

For the first limit, we use the same tools as for Prop.~\ref{prop:freq}. Let $\phi$ be the primitive morphism given in Remark~\ref{rem:morphic}, so that the sequence $u$ is the image of $\phi^{\omega}(q_0)$ by $\tau$. One can see that the sequence $(u_{kn+\alpha})_{n\in\N}$ is the image of $\phi^{\omega}(q_0)$ by the function $\tau':Q\to G$ defined by $\tau'(g,i)=g+f(i,\alpha)$. As we have already seen in the proof of Prop.~\ref{prop:freq}, all the elements of $Q$ have the same frequency in $\phi^{\omega}(q_0)$. Consequently, each element of $G$ has the same frequency in the image of $\phi^{\omega}(q_0)$ by $\tau'$. Indeed, for any $g'\in G$ and $i\in\Sk$, there exists exactly one $g\in G$ such that $\tau'(g,i)=g'$, so that the cardinal of $\tau'^{-1}(\{g'\})$ does not depend on the choice of $g'$.

The second limit is a small variation of Cor.~\ref{cor:corrRSdiff}, and it can be proven exactly in the same way, using the same steps as in Prop.~\ref{prop:fibresRS}. 
Furthermore, for any $(i,j)\in G^2$, we have
$$\sum_{n=0}^{k^{2N+1}-1} \delta_{i,j}^r(n) \geq \sum_{\alpha\in \Sk} \sum_{\ell\in G} A^N_{\ell}(\alpha) \; B^N_{i-\ell,j-\ell}(\alpha).$$
It follows that
$$C^r_{i,j}(k^{2N+1}) \geq {1\over k} \sum_{\alpha\in \Sk} \sum_{\ell\in G} {A^N_{\ell}(\alpha)\over k^N} \; {B^N_{i-\ell,j-\ell}(\alpha)\over k^N}.$$
When $N$ goes to infinity, we know that the limit of the left term exists and is equal to $C^r_{i,j}$. We thus obtain
$$C^r_{i,j} \geq {1\over |G|^2}.$$
Since $\sum_{(i,j)\in G^2}C^r_{i,j}=1$, this ends the proof.
\end{proof}

\section{Higher dimensional generalised Rudin--Shapiro sequences}

We propose the following natural extension of Def.~\ref{def:blockadd} and~\ref{def:grs}  in dimension $d$. For greater readability, we represent the elements of $\Sk^d$ as column vectors.

\begin{definition} 
Let $(G,+)$ be a finite abelian group, and let $k\in\Ns$. We say that the sequence $u=(u_{n_1,\ldots,n_d})_{(n_1,\ldots,n_d)\in\N^d}\in G^{\N^d}$ is a \emph{$d$-dimensional block-additive sequence in base $k$} if there exists a map $f:\Sk^d\times \Sk^d\to G$ satisfying $f\left(\myvec{0\\ \vdots\\0},\myvec{0\\ \vdots\\0}\right)=0$, such that for any integer $n\in\N$, we have
$$u_{n_1,\ldots,n_d}=\sum_{i\in\N} f\left(\myvec{x^1_i\\\vdots\\x^d_i},\myvec{x^1_{i+1}\\\vdots\\x^d_{i+1}}\right)=\sum_{i\in\N} f(x_i,x_{i+1}),$$
where $x=(x_i)_{i\in\N}=\myvec{x^1\\ \vdots \\x^d}=\myvec{(x_i^1)_{i\in\N}\\ \vdots \\(x_i^d)_{i\in\N}}=\myvec{[n_1]_k\\ \vdots \\ [n_d]_k}$.

We say furthermore that the sequence $u$ is a \emph{generalised $d$-dimensional Rudin--Shapiro sequence} if the function $f$ satisfies
$$\forall (i,j)\in \Sk^d\times \Sk^d \mbox{ with }\; i\not= j, \quad \forall g\in G, \quad \card \Big \{h\in \Sk^d : f(i,h)-f(j,h) = g \Big \} ={k\over |G|}.$$
Equivalently, this amounts to saying that the matrix $(f(i,j))_{(i,j)\in \Sk^d\times\Sk^d}$ is a difference matrix. 
\end{definition}

As in the one-dimensional case, a $d$-dimensional sequence that is block-additive in base $k$ is a $k$-automatic sequence.

Let $r\in\N^d\setminus\{(0,\ldots,0)\}$. For $n=(n_1,\ldots,n_d)\in\N^d$, we introduce the representations of $n$ and $n+r$ in base $k$ as follows
\begin{align*}
[n]_k=x=\myvec{x^1 \\ \vdots \\ x^d}, \qquad
[n+r]_k=y=\myvec{y^1 \\ \vdots \\ y^d},
\end{align*}
and we define the integer
$$c_n=\min\{i\in\N : \;\forall j>i, \, x_j=y_j\},$$
which measures how far the carry propagates when adding $r$ to $n$.

We define again the \emph{fibre} of $n$ as the set
$$\F(n)=\{m\in\N : x'=[m]_k \mbox{ satisfies } \forall i\in\N\setminus\{c_n+1\},\; x'_i=x_i\},$$
and use the notation $\Dr(n)=u_{n+r}-u_n.$

Since the $d$-dimensional sequence has $d$ components that are all 1-dimensional and independent, the previous arguments can be repeated verbatim.

\begin{proposition} If $u$ is a generalised $d$-dimensional Rudin--Shapiro sequence, then for any $n\in\N$,  
$$\forall g\in G, \quad \card \{m\in\F(n) : \Dr(m) = g \} = \pi.$$
\end{proposition}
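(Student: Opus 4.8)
The plan is to reproduce the proof of Proposition~\ref{prop:fibre1} essentially line by line, the only structural change being that the digits $x_i,y_i$ now lie in $\Sk^d$ instead of $\Sk$, and that the difference condition is invoked for vector arguments. First I would record, exactly as in Proposition~\ref{prop:fibre1}, that since $x_j=y_j$ for every $j>c_n$, the defining sum for $u$ telescopes, so that
$$\Dr(n)=\sum_{i\in\N}\bigl(f(y_i,y_{i+1})-f(x_i,x_{i+1})\bigr)=\sum_{i=0}^{c_n}\bigl(f(y_i,y_{i+1})-f(x_i,x_{i+1})\bigr),$$
and likewise $\Dr(m)=\sum_{i=0}^{c_n}\bigl(f(y'_i,y'_{i+1})-f(x'_i,x'_{i+1})\bigr)$ for $m\in\F(n)$, where $x'=[m]_k$ and $y'=[m+r]_k$.

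The key new point, and the step I expect to be the main obstacle, is to verify that the carry picture of~\eqref{eq:m} survives verbatim when the modified digit $x'_{c_n+1}=h$ is allowed to range over all of $\Sk^d$. Here I would invoke the independence of the $d$ coordinates: the addition $n+r$ decomposes into $d$ independent one-dimensional additions, and $c_n=\min\{i:\forall j>i,\,x_j=y_j\}$ dominates the individual carry position of every coordinate. Consequently, for every $m\in\F(n)$ the carry produced by adding $r$ has already stopped before reaching position $c_n+1$ in each coordinate, so $c_m=c_n$, the equivalence $m\in\F(n)\iff n\in\F(m)$ holds, and one gets $y'_{c_n+1}=x'_{c_n+1}=h$ together with $y'_i=y_i$ for all $i\neq c_n+1$, precisely as displayed in~\eqref{eq:m}.

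With the representations controlled, the computation of $\Dr(m)-\Dr(n)$ reduces exactly as in Proposition~\ref{prop:fibre1}: all terms with index $i\neq c_n$ cancel, and using $y'_{c_n+1}=x'_{c_n+1}=h$ one obtains
$$\Dr(m)-\Dr(n)=\bigl(f(y_{c_n},h)-f(x_{c_n},h)\bigr)-A_n,\qquad A_n=f(y_{c_n},x_{c_n+1})-f(x_{c_n},x_{c_n+1}),$$
where $A_n$ does not depend on $h$. Since $x_{c_n}\neq y_{c_n}$ are distinct elements of $\Sk^d$, the $d$-dimensional difference condition applies to the pair $(y_{c_n},x_{c_n})$, so that as $h$ ranges over $\Sk^d$ each value of $f(y_{c_n},h)-f(x_{c_n},h)$ is attained the prescribed number of times. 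It then follows that for every $g\in G$ the count $\card\{m\in\F(n):\Dr(m)=g\}$ equals $\pi$, which establishes the proposition.
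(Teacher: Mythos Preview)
Your proposal is correct and follows exactly the approach the paper has in mind: the paper itself gives no detailed proof here, saying only that ``since the $d$-dimensional sequence has $d$ components that are all $1$-dimensional and independent, the previous arguments can be repeated verbatim.'' You have in fact been more careful than the paper, explicitly verifying that the carry picture~\eqref{eq:m} transfers to vector-valued digits (using that $c_n$ dominates each coordinate-wise carry position) before invoking the $d$-dimensional difference condition on the pair $(y_{c_n},x_{c_n})\in\Sk^d\times\Sk^d$.
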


We also extend the notations $\dr$ and $C^r$ to $d$-dimensional sequences. Precisely, for $N=(N_1,\ldots,N_d)$, we define
$$C^r_{i,j}(N)={1\over {N_1\cdots N_d}} \sum_{\{n\in\N^d \, : \, n<N\}} \dr_{i,j}(n),$$
where we write $n<N$ for $\forall i\in\{1,\ldots,d\}, n_i<N_i$. We also introduce
$$C^r_{i,j}=\lim_{N\to\infty}C^r_{i,j}(N).$$

Following the previous lines, one can show as in the one-dimensional case that if $u$ is a generalised $d$-dimensional Rudin--Shapiro sequence, then for any $(i,j)\in G^2$,
$$\sum_{\ell\in G} C^r_{i-\ell,j-\ell} = {1\over |G|},$$
which also allows to obtain the following extension of Prop.~\ref{prop:cormat}.

\begin{proposition} If $u$ is a generalised $d$-dimensional Rudin--Shapiro sequence, then for any $(i,j)\in G^2$, 
$$C^r_{i,j}= {1\over |G|^2}.$$
\end{proposition}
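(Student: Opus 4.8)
The plan is to transpose the proof of Prop.~\ref{prop:cormat} to the level of digit-vectors, relying on the two ingredients already available in the $d$-dimensional setting: the frequency statement coming from the primitive morphic description (each element of $G$ occurs with frequency $1/|G|$), and the diagonal identity $\sum_{\ell\in G} C^r_{i-\ell,j-\ell}=1/|G|$ recorded just above. Since $C^r_{i,j}$ is the limit of $C^r_{i,j}(N)$ as every component of $N=(N_1,\dots,N_d)$ tends to infinity, and since this limit is known to exist, it suffices to evaluate it along the cofinal sequence $N=(k^{2M+1},\dots,k^{2M+1})$, for which $N_1\cdots N_d=k^{d(2M+1)}$, and let $M\to\infty$.

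First I would fix a digit-vector $\alpha\in\Sk^d$ and parametrise the integers $n\in\N^d$ with $n_s<k^{2M+1}$ whose digit-vector at position $M$ equals $\alpha$: writing $n_s=m_1^s\,k^{M+1}+\alpha_s\,k^M+m_2^s$ in each coordinate, such an $n$ is encoded by the two vectors $m_1=(m_1^s)_s$ and $m_2=(m_2^s)_s$ ranging independently over $\lb 0,k^M-1\rb^d$. If moreover $m_2^s<k^M-r_s$ for every $s$ --- so that no coordinate carries past position $M-1$ --- then $c_n<M$. As the weight function couples only digit-vectors at consecutive positions, block-additivity then separates $(u_n,u_{n+r})$ into an ``upper'' contribution depending only on $(m_1,\alpha)$ and a ``lower'' contribution depending only on $(m_2,\alpha,r)$, exactly as in the display of Prop.~\ref{prop:cormat}, up to the constant boundary term $f(\alpha,0)$. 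I would then introduce the $d$-dimensional counts
$$A^\alpha_\ell(M)=\card\{m\in\lb 0,k^M-1\rb^d : u_{km+\alpha}=\ell\},$$
$$B^{r,\alpha}_{i,j}(M)=\card\{m\in\lb 0,k^M-1\rb^d : m_s<k^M-r_s\ \forall s,\ \delta^r_{i,j}(\alpha k^M+m)=1\},$$
where the subscripts denote the elements of $\N^d$ obtained coordinatewise: $km+\alpha$ carries $\alpha$ as lowest digit-vector followed by the digit-vectors of $m$, while $\alpha k^M+m$ places $\alpha$ at position $M$ above the digit-vectors of $m$.

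Next I would establish the two limits $A^\alpha_\ell(M)/k^{dM}\to 1/|G|$ and $\sum_{\ell\in G}B^{r,\alpha}_{i-\ell,j-\ell}(M)/k^{dM}\to 1/|G|$. The first reproduces the frequency argument of Prop.~\ref{prop:freq}: the sequence $(u_{km+\alpha})_m$ is the image of the fixed point of the $d$-dimensional morphism under the shifted output map $\tau'(g,i)=g+f(i,\alpha)$, and for each $g'\in G$ the set $\tau'^{-1}(\{g'\})$ contains exactly one state per digit-vector $i\in\Sk^d$; since all states of $Q=G\times\Sk^d$ have equal frequency, every value $\ell$ is equidistributed with frequency $1/|G|$. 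The second limit is the $d$-dimensional analogue of Cor.~\ref{cor:corrRSdiff} already recorded, obtained by repeating the fibre count and the carry estimate of Prop.~\ref{prop:fibresRS} coordinatewise. Using the independence of $m_1$ and $m_2$ and discarding the vanishing proportion of pairs with some $m_2^s\geq k^M-r_s$, these give
$$C^r_{i,j}(N)\ \geq\ \frac{1}{k^d}\sum_{\alpha\in\Sk^d}\sum_{\ell\in G}\frac{A^\alpha_\ell(M)}{k^{dM}}\cdot\frac{B^{r,\alpha}_{i-\ell,j-\ell}(M)}{k^{dM}}\ \xrightarrow[M\to\infty]{}\ \frac{1}{|G|^2},$$
the constant $f(\alpha,0)$ being absorbed because $A^\alpha_\ell(M)/k^{dM}\to 1/|G|$ uniformly in $\ell$. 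Hence $C^r_{i,j}\geq 1/|G|^2$ for every $(i,j)\in G^2$; since $\sum_{(i,j)\in G^2}C^r_{i,j}=1$ and there are $|G|^2$ pairs, all these inequalities must be equalities, giving $C^r_{i,j}=1/|G|^2$.

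The only genuinely $d$-dimensional points are the simultaneous control of the carry across all $d$ coordinates --- the ``good'' set of pairs is the product box $\{m_2^s<k^M-r_s \text{ for all } s\}$, whose complement has vanishing density by a $d$-fold product of the one-dimensional estimate in Prop.~\ref{prop:fibresRS} --- and the verification that the fibre count feeding the second limit persists under the $d$-dimensional difference condition, which is precisely the fibre proposition stated above. The equidistribution behind the first limit needs only the $k^d$-to-one preimage structure of the morphism, valid for any block-additive sequence, together with primitivity for the existence of frequencies. I therefore expect no conceptual obstacle: every step is a one-dimensional argument carried out with digit-vectors in place of digits. The main difficulty is purely notational --- keeping the insertion operations $km+\alpha$, $\alpha k^M+m$ and the boundary constant $f(\alpha,0)$ consistent across all coordinates --- which I would neutralise by fixing the coordinatewise conventions once at the outset.
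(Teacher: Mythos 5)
Your proposal is correct and takes essentially the same route as the paper, which treats this proposition as a verbatim transposition of Prop.~\ref{prop:cormat} (splitting $n$ around a fixed digit-vector $\alpha$ at a middle position, controlling the carry coordinatewise, and combining the frequency limit for $A^\alpha_\ell$ with the $d$-dimensional analogue of Cor.~\ref{cor:corrRSdiff} before summing the resulting inequalities over all $|G|^2$ pairs). Your explicit handling of the product carry condition and of the boundary term $f(\alpha,0)$ simply fills in details the paper leaves implicit.
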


\begin{example} We present in Fig.~\ref{fig:2D} four different examples of generalised Rudin--Shapiro sequence, for $d=2$, $k=2$, $G=\Z_2$. For each example, the values of the function $f:\Sigma_2^2\to\Z_2$ is given by a matrix, with the elements of $\Sigma_2^2$ sorted in the lexicographic order. On the first line of the matrix, one can thus read successively 
$$f\left(\myvec{0\\0},\myvec{0\\0}\right),f\left(\myvec{0\\0},\myvec{0\\1}\right),f\left(\myvec{0\\0},\myvec{1\\0}\right),f\left(\myvec{0\\0},\myvec{1\\1}\right),$$ 
and then on the second line $$f\left(\myvec{0\\1},\myvec{0\\0}\right),f\left(\myvec{0\\1},\myvec{0\\1}\right),\ldots$$ 
and so on. On the pictures, the cell $(n_1,n_2)\in\N^2$ is colored in blue if $u_{n_1,n_2}=1$ and in white if $u_{n_1,n_2}=0$. The corner corresponding to the value $u_{0,0}$ is the bottom-left corner.

Let us present more in detail the first example. For $i,j\in\Sigma_2^2$, the weight function satisfies $f(i,j)=0$ if $i=j$, and $f(i,j)=1$ otherwise. As an example, we compute below $u_{436,48}$. 
$$\begin{array}{cccccccccccccl}
[436]_2 & = & 0 & 0 & 1 & 0 & 1 & 1 & 0 & 1 & 1 & 0 & 0 &\cdots \\
{[48]_2} &= & 0 & 0 & 0 & 0 & 1 & 1 & 0 & 0 & 0 & 0 & 0 & \cdots \\ 
\hline
u_{436,48} & \equiv && \hspace{-0.2cm} 0 \; +& \hspace{-0.2cm} 1 \; +& \hspace{-0.2cm} 1 \; +& \hspace{-0.2cm} 1 \; +& \hspace{-0.2cm} 0\; +& \hspace{-0.2cm} 1\; +& \hspace{-0.2cm} 1\; +& \hspace{-0.2cm} 0\; +& \hspace{-0.2cm} 1\; +& \hspace{-0.2cm} 0 \; +& \cdots \; \equiv 0 \pmod{2}
\end{array}$$
The following table gives the first values of $u_{n_1,n_2}$, for $(n_1,n_2)\in \lb 0, 2^3-1\rb^2$.
$$
\begin{array}{c|ccccccccc}
7 &1 &0 &1 &0 &0 &1 &0 &1\\
6 &0 &0 &1 &1 &1 &1 &0 &0\\
5 &1 &1 &1 &1 &1 &1 &1 &1\\
4 &0 &1 &1 &0 &0 &1 &1 &0\\
3 &1 &0 &0 &1 &0 &1 &1 &0\\
2 & 0 &0 &0 &0 &1 &1 &1 &1\\
1&1 &1 &0 &0 &1 &1 &0 &0\\
0&0 &1 &0 &1 &0 &1 &0 &1\\ \hline
{\tiny \;^{n_2}\hspace{-0.1cm}\diagup\hspace{-0.2cm}\;_{n_1}}\hspace{-0.2cm}\;&0&1&2&3&4&5&6&7\\
\end{array}
$$
These values are also contained in the bottom-left $8\times 8$-squares of the two pictures on the first line of Fig.~\ref{fig:2D}.

Concerning the second example, it can be seen that the weight functions satisfies 
$$f\left(\myvec{i_1\\i_2},\myvec{j_1\\j_2}\right)\equiv i_1j_1+i_2j_2 \pmod{2}.$$
As a consequence, the sequence obtained can also be computed by $u_{m,n}=v_m+v_n$, where $v$ is the classical one-dimensional Rudin--Shapiro sequence.

\begin{figure}
\newcolumntype{C}{>{\centering\arraybackslash}m{2.5cm}}
\newcolumntype{D}{>{\centering\arraybackslash}m{6cm}}
\begin{tabular}{|C|D|D|}
\hline
Matrix & Terms in $\lb 0, 2^7-1\rb^2$ & Terms in $\lb 0, 2^{10}-1\rb^2$ \\
\hline
$\begin{pmatrix}
0&1&1&1\\
1&0&1&1\\
1&1&0&1\\
1&1&1&0
\end{pmatrix}$
&\reflectbox{\rotatebox[origin=c]{180}{\includegraphics[scale=0.45]{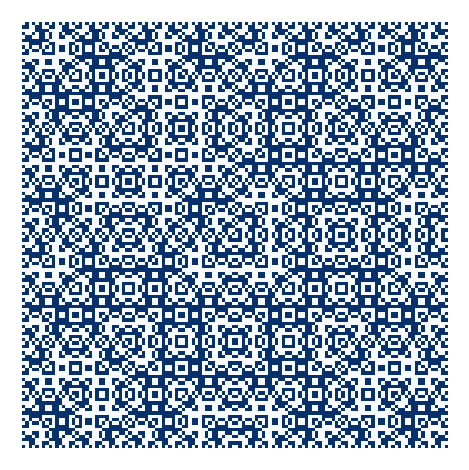}}}
&\reflectbox{\rotatebox[origin=c]{180}{\includegraphics[scale=0.45]{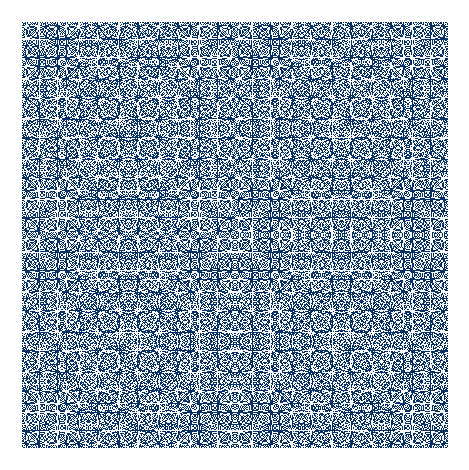}}}\\
\hline
$\begin{pmatrix}
0&0&0&0\\
0&1&0&1\\
0&0&1&1\\
0&1&1&0
\end{pmatrix}$
&\reflectbox{\rotatebox[origin=c]{180}{\includegraphics[scale=0.45]{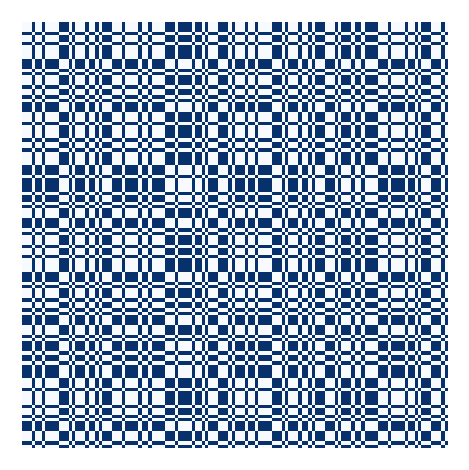}}}
&\reflectbox{\rotatebox[origin=c]{180}{\includegraphics[scale=0.45]{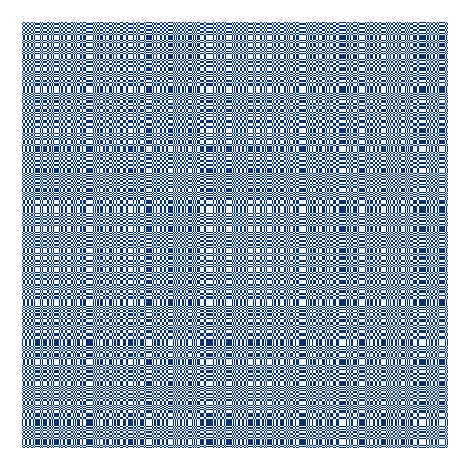}}}\\
\hline
$\begin{pmatrix}
0&0&0&0\\
0&0&1&1\\
0&1&0&1\\
0&1&1&0
\end{pmatrix}$
&\reflectbox{\rotatebox[origin=c]{180}{\includegraphics[scale=0.45]{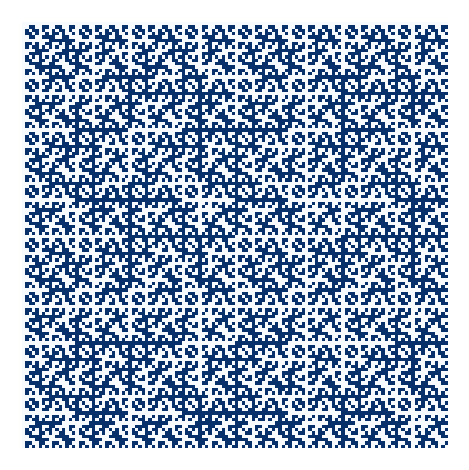}}}
&\reflectbox{\rotatebox[origin=c]{180}{\includegraphics[scale=0.45]{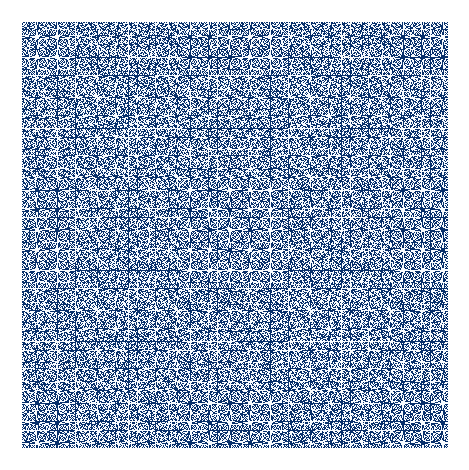}}}\\
\hline
$\begin{pmatrix}
0&0&1&1\\
0&1&0&1\\
0&0&0&0\\
0&1&1&0
\end{pmatrix}$
&\reflectbox{\rotatebox[origin=c]{180}{\includegraphics[scale=0.45]{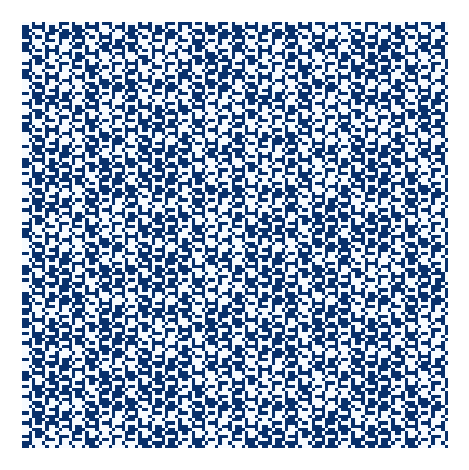}}}
&\reflectbox{\rotatebox[origin=c]{180}{\includegraphics[scale=0.45]{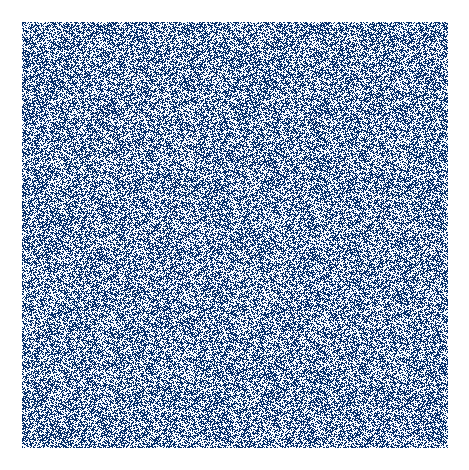}}}\\
\hline
\end{tabular}
\caption{Examples of generalised $2$-dimensional Rudin--Shapiro sequence in base $2$}
\end{figure}\label{fig:2D}

\end{example}

\section{Extensions and open questions}

\subsection{Block-additive sequences of rank larger than 2}

Until now, we have only considered block-additive of rank 2. More generally, we can consider the notion of block-additive function of rank $L$, for an integer $L\in\Ns$, in the sense of Cateland~\cite{Ca92}.

\begin{definition} 
Let $(G,+)$ be a finite abelian group, let $k\in\Ns$, and let $f:\Sk^L\to G$ be a function satisfying $f(0,0,\ldots,0)=0$. We say that the sequence $u=(u_n)_{n\in\N}\in G^{\N}$ is a \emph{block-additive sequence (of rank $L$) in base $k$} of \emph{weight function} $f$ if for any integer $n\in\N$, we have
$$u_n=\sum_{i\in\N} f(x_i,x_{i+1},\ldots,x_{i+L-1}),$$
where $[n]_k=x$.
\end{definition}

Let $(G,+)$ be a finite abelian group, and let $k\in\Ns$. We say that the function $d:\Sk^L\to G$ satisfies the \emph{difference condition (of rank $L$)} if: 
\begin{align*}
&\forall (i,j)\in \Sk\times\Sk \mbox{ with }\; i\not= j, \quad \forall (x_2,\ldots,x_{L-1})\in \Sk^{L-2},\\
&\forall g\in G, \quad \card \{h\in \Sk : d(i,x_2,\ldots,x_{L-1},h)-d(j,x_2,\ldots,x_{L-1},h) = g \} ={k\over |G|}.
\end{align*}

The difference condition is a sufficient condition for obtaining the same results as in Section~\ref{Sec:RS2}. 

\begin{example} Let us set $k=2$, $G=\Z_2$, and let $f:\Sk^3\to G$ be defined by $f(x,y,z)=\begin{cases} 0 \mbox{ if } x=y=z\\ 1 \mbox{ otherwise.} \end{cases}$.
This function satisfies the difference condition. Consequently, the block-additive sequence $u=(u_n)_{n\in\N}$ of weight function $f$, which is such that $u_n$ counts (modulo $2$) the number of blocks different from $000$ and $111$ in the binary representation of $n$, has the same correlations of order $2$ as a binary sequence chosen uniformly at random. 
\end{example}

\begin{open} How can we generate functions satisfying the difference condition of rank $L$? Could there be a weaker condition on the weight function for which the block-additive sequences obtained would have the same correlations of order $2$?
\end{open}

\subsection{Can an automatic sequence look even more random?}

Another possible direction of research consists in trying to construct block-additive sequences for which not only the correlations of order $2$, but also correlations of higher order would be the same as for uniform random sequences. Precisely, for integers $0<r_1<\ldots<r_{\ell-1}$, and for a choice $(i_0,\ldots,i_{\ell-1})\in G^{\ell}$, we introduce 
$$\dr_{i_0,\ldots,i_{\ell-1}}(n)=
\begin{cases} 
1 & \mbox{ if } (u_n,u_{n+r_1},\ldots,u_{n+r_{\ell-1}})=(i_0,\ldots,i_{\ell-1}),\\
0 & \mbox{ otherwise,}
\end{cases}$$
and we look at the asymptotic behaviour of ${1\over N}\sum_{n=0}^{N-1} \dr_{(i_0,\ldots,i_{\ell-1})}(n)$, when $N$ goes to infinity. We say that a sequence has the same correlations of order $\ell$ as a uniform random sequence if for any choice of $0<r_1<\ldots<r_{\ell-1}$, and for any $(i_0,\ldots,i_{\ell-1})\in G^{\ell}$, 
$$\lim_{N\to\infty} {1\over N} \sum_{n=0}^{N-1} \dr_{i_0,\ldots,i_{\ell-1}}(n)={1\over |G|^{\ell}}.$$

\begin{open} For a given $\ell\geq 3$, is it possible to construct a block-additive sequence having the same correlations of order $\ell$ as a uniform random sequence?
\end{open}

Note that it is not possible to construct an automatic sequence such that \emph{for any $\ell\geq 1$}, the correlations of order $\ell$ would be the same as for a uniform random sequence. Indeed, this would in particular imply the sequence to be normal, while the complexity of an automatic sequence is at most linear.

\bibliographystyle{plainurl}
\bibliography{biblio.bib}

\end{document}